\numberwithin{equation}{section}
\numberwithin{figure}{section}
  \newenvironment{svmultproof}{\begin{proof}}{\qed\end{proof}}
\newenvironment{lyxlist}[1]
{\begin{list}{}
{\settowidth{\labelwidth}{#1}
 \setlength{\leftmargin}{\labelwidth}
 \addtolength{\leftmargin}{\labelsep}
 }}
{\end{list}}
\begin{document}

\title{Approximation of high quantiles from intermediate quantiles}

\author{Cees de Valk}

\institute{CentER, Tilburg University, P.O. Box 90153, 5000 LE Tilburg, The
Netherlands. Email: ceesfdevalk@gmail.com}

\date{March 4, 2016}
\maketitle
\begin{abstract}
Motivated by applications requiring quantile estimates for very small
probabilities of exceedance $p_{n}\ll1/n$, this article addresses
estimation of high quantiles for $p_{n}$ satisfying $p_{n}\in[n^{-\tau_{2}},n^{-\tau_{1}}]$
for some $\tau_{1}>1$ and $\tau_{2}>\tau_{1}$. For this purpose,
the tail regularity assumption $\log U\circ\exp\in ERV$ (with $U$
the left-continuous inverse of $1/(1-F)$, and $ERV$ the extended
regularly varying functions) is explored as an alternative to the
classical regularity assumption $U\in ERV$ (corresponding to the
Generalised Pareto tail limit). Motivation for the alternative regularity
assumption is provided, and it is shown to be equivalent to a limit
relation for the logarithm of the survival function, the log-GW tail
limit, which \foreignlanguage{british}{generalises} the GW (Generalised
Weibull) tail limit, a generalisation of the Weibull tail limit. The
domain of attraction is described, and convergence results are presented
for quantile approximation and for a simple quantile estimator based
on the log-GW tail. Simulations are presented, and advantages and
limitations of log-GW-based estimation of high quantiles are indicated.\end{abstract}

\begin{quote}
\textbf{Mathematics Subject Classification (20}1\textbf{0)}: 60G70,
62G32, 26A12, 26A48
\end{quote}

\section{Introduction\label{sec:Introduction}}

An important application of extreme value theory is the estimation
of tail quantiles. Theoretical analysis usually addresses tail quantile
estimation from $n$ independent random variables $\{X_{1},...,,X_{n}\}$
with common distribution function $F$, and considers the asymptotic
properties of estimators as $n\rightarrow\infty$. Of particular interest
are \emph{high quantiles}, exceeded with probabilities $p_{n}=O(1/n)$;
see \emph{e.g. }\citet{Weissman}, \citet{Dekkers=000026E=000026dH},
\citet{deH=000026R} and for dependent random variables, \citet{Drees}. 

Let $X_{1,n}\leq X_{2,n}\leq...\leq X_{n,n}$ be the order statistics
derived from $\{X_{1},...,,X_{n}\}$, and let $U$ denote the left-continuous
inverse of $1/(1-F)$ on $(1,\infty)$. The\emph{ }\textit{intermediate
quantile} $U(n/k_{n})$, with the sequence $(k_{n})$ satisfying
\begin{equation}
k_{n}\in\{1,..,n\}\;\forall n\in\mathbb{N},\quad k_{n}/n\rightarrow0\quad\textrm{and}\quad k_{n}\rightarrow\infty,\label{eq:k_n}
\end{equation}
is under certain additional conditions estimated consistently by \textit{\emph{the
intermediate order statistic}} $X_{n-k_{n}+1,n}$ (\emph{e.g.} \citet{Laurens  boek},
Theorem 2.4.1). In contrast, the expected number of data points exceeding
a high quantile is eventually bounded. A high quantile estimator can
therefore not be expected to converge without some form of regularity
of the tail, allowing it to be derived from intermediate order statistics.

The classical regularity assumption on the upper tail of the distribution
function $F$ is often expressed as a condition on $U$; it requires
that a positive function $w$ and a non-constant function $\varphi$
exist such that
\begin{equation}
\lim_{t\rightarrow\infty}\frac{U(t\lambda)-U(t)}{w(t)}=\varphi(\lambda)\quad\forall\lambda\in C_{\varphi},\label{eq:U exc}
\end{equation}
with $C_{\varphi}$ the continuity points of $\varphi$ in $(0,\infty)$.
As the limiting function $\varphi$ is continuous (\emph{e.g.} \citet{Laurens  boek},
Theorem 1.1.3), $U$ satisfying (\ref{eq:U exc}) is extended regularly
varying (see \emph{e.g.} Appendix B2 of \citet{Laurens  boek}, or
Chapter 3 of \citet{Bingham}). Therefore, $w$ can be chosen to be
regularly varying and (since $U$ is nondecreasing) such that 
\begin{equation}
\varphi=h_{\gamma}\label{eq:phi}
\end{equation}
for some real $\gamma$ with for all positive $\lambda$,
\begin{equation}
h_{\gamma}(\lambda):={\scriptstyle \int_{1}^{\lambda}}t^{\gamma-1}dt,\label{eq:B}
\end{equation}
which is $\gamma^{-1}(\lambda^{\gamma}-1)$ if $\gamma\neq0$ and
$\log\lambda$ if $\gamma=0$; (\ref{eq:U exc}) with (\ref{eq:phi})
is equivalent to a Generalised Pareto (GP) tail limit for the survival
function (\emph{e.g.} \citet{Laurens  boek}, Theorem 1.1.2). In (\ref{eq:U exc}),
the limit on the right-hand side was left unspecified in order to
stress the nonparametric nature of the classical regularity assumption,
which makes it particularly attractive from the point of view of applications.

When referring to (\ref{eq:U exc}), we will write $U\in ERV$, with
$ERV$ the extended regularly varying functions%
\footnote{Ignoring that as an assumption, (\ref{eq:U exc}) is formally weaker
than $U\in ERV$; but since $U$ is nondecreasing, we know that $C_{\varphi}=\mathbb{R}^{+}$,
so the difference is immaterial. %
}. We will write $U\in ERV_{S}$ to specify that in addition, (\ref{eq:phi})
holds with $\gamma\in S\subset\mathbb{R}$, and $U\in ERV_{\{\gamma\}}(w)$
for (\ref{eq:U exc}) and (\ref{eq:phi}) with a particular $\gamma$
and positive $w$. We will apply the same notational conventions when
a limit relation of the form (\ref{eq:U exc}) applies to a nondecreasing
function other than $U$. For a regularly varying function $g$ (\emph{e.g.}
\citet{Bingham}), we will write $g\in RV$, or $g\in RV_{S}$ to
specify that $\lim_{t\rightarrow\infty}g(t\lambda)/g(t)=\lambda^{\alpha}$
for all $\lambda>0$ for some $\alpha\in S\subset\mathbb{R}$.

It has been known for long that existence of the limit (\ref{eq:U exc})
alone is of limited value for approximation of a high quantile $U(1/p_{n})$
with $p_{n}=O(1/n)$ from an intermediate quantile $U(n/k_{n})$ with
$(k_{n})$ as in (\ref{eq:k_n}), since $\lambda_{n}:=k_{n}/(np_{n})\rightarrow\infty$
as $n\rightarrow\infty$. Usually, additional assumptions on the rate
of convergence in (\ref{eq:U exc}) are introduced for this purpose,
such as (strong) second-order extended regular variation (\citet{deHaan=000026Stadt},
\citet{Laurens  boek}), the Hall class (\citet{Hall}), or conditions
(1.5) and (1.6) of \citet{deH=000026R}.

In this article, a different approach is explored: instead of strengthening
(\ref{eq:U exc}), we will look for an alternative regularity assumption
specifically to approximate certain high quantiles from intermediate
quantiles, and by extension, to estimate such high quantiles. The
quantiles we will focus on are very high quantiles corresponding to
probabilities of exceedance $(p_{n})$ satisfying
\begin{equation}
p_{n}\in[n^{-\tau_{2}},n^{-\tau_{1}}]\quad\textrm{for some}\quad\tau_{1}>1,\;\tau_{2}>\tau_{1},\label{eq:p_n}
\end{equation}
without excluding that the approximation may also be suitable for
less rapidly vanishing $(p_{n})$. This choice is motivated by applications
requiring quantile estimates for probabilities of exceedance $p_{n}$
satisfying $p_{n}n\ll1$, such as flood hazard assessment (\citet{arch-en}),
design criteria on wind, waves and currents for offshore structures
(\citet{ISO}, paragraph A.5.7), seismic hazard assessment (\citet{Adams})
and analysis of bank operational risk (\citet{Cope}). In such applications,
one would want an estimator for $U(1/p_{n})$ which converges in a
meaningful sense even when $p_{n}n\rightarrow0$ as $n\rightarrow\infty$.
However, the latter condition is difficult to handle in its full generality.
Therefore, we will narrow the focus to $(p_{n})$ satisfying (\ref{eq:p_n}).
Moreover, we will try to find an estimator which for $p_{n}=n^{-\tau}$
converges (in some yet-to-be-defined sense) uniformly in $\tau\in[1,T]$
for every $T>1$. In practical terms, this means that if the assumptions
for convergence are satisfied, then an estimate of a quantile exceeded
with a probability of, say, 0.01 can be extended to an estimate of
the quantile exceeded with a probability of 0.0001 without seriously
stretching the assumptions%
\footnote{Of course, this does not obviate the need to investigate whether these
assumptions apply.%
}, as these probabilities differ only a factor of two in terms of $\tau$.
Such flexibility is important in applications, because $p_{n}$ is
generally based on social and economic considerations, without regard
for the feasibility of estimating $U(1/p_{n})$. 

For convenience, we will assume throughout that $U(\infty):=\lim_{t\rightarrow\infty}U(t)>1$.

\section{An alternative regularity condition\label{sec:Ambition-and}}

The alternative regularity assumption on the upper tail of $F$ proposed
for estimation of a very high quantile $U(1/p_{n})$ with $(p_{n})$
satisfying (\ref{eq:p_n}) is
\begin{equation}
\log q\in ERV\label{eq:logq_in_PI}
\end{equation}
with 

\begin{equation}
q:=U\circ\exp.\label{eq:def_q}
\end{equation}

(\ref{eq:logq_in_PI}) is of the same nonparametric form as the classical
regularity assumption (\ref{eq:U exc}), but with $U$ replaced by
$\log U\circ\exp$. Therefore, it implies that for some real $\theta$
and positive function $g$,
\begin{equation}
\lim_{y\rightarrow\infty}\frac{\log q(y\lambda)-\log q(y)}{g(y)}=h_{\theta}(\lambda)\quad\forall\lambda>0.\label{eq:logq_in_PI_explic}
\end{equation}

To see the relevance of (\ref{eq:logq_in_PI}) for approximation of
a very high quantile $q(-\log p_{n})=U(1/p_{n})$ with $(p_{n})$
satisfying (\ref{eq:p_n}) from an intermediate quantile $q(\log(n/k_{n}))=U(n/k_{n})$,
assume that in addition to (\ref{eq:k_n}), $\limsup_{n\rightarrow\infty}\log k_{n}/\log n<1.$
This ensures that $-\log p_{n}=O(\log(n/k_{n}))$ as $n\rightarrow\infty$,
and since convergence in (\ref{eq:logq_in_PI_explic}) is locally
uniform in $\lambda>0$ (\emph{e.g.} \citet{Bingham}, Theorem 3.1.16),
it implies
\[
\lim_{n\rightarrow\infty}\left|\frac{\log U(1/p_{n})-\log U(n/k_{n})}{g(\log(n/k_{n}))}-h_{\theta}\left(\frac{\log(1/p_{n})}{\log(n/k_{n})}\right)\right|=0.
\]

The limit relation (\ref{eq:logq_in_PI_explic}) can be reformulated
in terms of the survival function:
\begin{theorem}
\label{thm:BdeHP} The limit relation (\ref{eq:logq_in_PI_explic})
for some positive function $g$ and real $\theta$ is equivalent to\textup{
\begin{equation}
\lim_{y\rightarrow\infty}\frac{\log(1-F(q(y)\textrm{e}^{xg(y)}))}{y}=-h_{\theta}^{-1}(x)\quad\forall x\in h_{\theta}(\mathbb{R}^{+}),\label{eq:log-GW_surv_logform}
\end{equation}
}\end{theorem}
\begin{svmultproof}
Equivalence of (\ref{eq:logq_in_PI_explic}) and (\ref{eq:log-GW_surv_logform})
is implied by Lemma 1.1.1 in \citet{Laurens  boek}. 
\end{svmultproof}

The pair (\ref{eq:logq_in_PI_explic}) and (\ref{eq:log-GW_surv_logform})
of equivalent limits can be seen as the analogue for $\log q$ of
(\ref{eq:U exc}) with $\varphi=h_{\gamma}$ and the equivalent GP
limit for the survival function
\begin{equation}
\lim_{t\rightarrow\infty}t(1-F(xw(t)+U(t)))=1/h_{\gamma}^{-1}(x)\quad\forall x\in h_{\gamma}(\mathbb{R}^{+})\label{eq:GP_surv}
\end{equation}
(\emph{e.g.} \citet{Laurens  boek}, Theorem 1.1.2). It is important
to realise that convergence of a log-ratio of probabilities as in
(\ref{eq:log-GW_surv_logform}) is a much weaker notion than convergence
of a ratio of probabilities as in the GP limit (\ref{eq:GP_surv}).
This difference reflects precisely the difference in extrapolation
range between (\ref{eq:logq_in_PI_explic}) and (\ref{eq:U exc}):
when extrapolating over a longer range, larger errors should be expected
in principle, unless additional assumptions apply. 

To illustrate that condition (\ref{eq:logq_in_PI}) is a natural assumption,
Proposition \ref{pro:spec-lim} below shows how it may arise in the
context of a GP tail limit (\ref{eq:U exc}) with (\ref{eq:phi})
and the GP quantile approximation
\begin{equation}
\tilde{U}_{t}(z):=U(t)+h_{\gamma}(z/t)w(t)\label{eq:Utilde_def}
\end{equation}
with $w$ and $\gamma$ as in (\ref{eq:U exc}) and (\ref{eq:phi}).
\begin{proposition}
\label{pro:spec-lim}Let $U\in ERV_{\{\gamma\}}(w)$.

(a) If $\gamma>0$, then
\begin{equation}
\lim_{t\rightarrow\infty}\frac{\log\tilde{U}_{t}(t^{\lambda})-\log U(t^{\lambda})}{\log U(t^{\lambda})-\log U(t)}=0\quad\forall\lambda>1\label{eq:GP_conv_log}
\end{equation}
and
\[
\log q\in ERV_{\{1\}}.
\]

(b) If $\gamma=0$ and 
\begin{equation}
\lim_{t\rightarrow\infty}\frac{\tilde{U}_{t}(t^{\lambda})-U(t^{\lambda})}{U(t^{\lambda})-U(t)}=0\quad\forall\lambda>1,\label{eq:GP_conv}
\end{equation}
then
\[
q\in ERV_{\{1\}}\quad\textrm{and}\quad\log q\in ERV_{\{0\}}(1).
\]
\end{proposition}
\begin{proof}
The proof is found in Subsection \ref{sub:Proof-of-spec-lim}.
\end{proof}

For distribution functions in the domain of attraction of the GP tail
limit, Proposition \ref{pro:spec-lim} shows that the condition (\ref{eq:logq_in_PI})
must hold if $\gamma>0$; if $\gamma=0$, it is a necessary condition
for convergence of the relative error in the GP approximation in the
sense of (\ref{eq:GP_conv})%
\footnote{Irrespective of which additional assumptions are invoked in order
to guarantee (\ref{eq:GP_conv}).%
}. 

Proposition \ref{pro:spec-lim} also provides some basic insight into
the strengths and limitations of the GP quantile approximation (\ref{eq:Utilde_def})
for very high quantiles. If $\gamma>0$, there is no problem; the
notion of convergence in (\ref{eq:GP_conv_log}) may be weak, but
can be considered appropriate for these heavy-tailed distribution
functions. However, if $\gamma=0$ and (\ref{eq:GP_conv}) holds,
then necessarily, $q\in ERV_{\{1\}}$, which is a restrictive condition.
For example, for the normal distribution, $q\in ERV_{\{1/2\}}$, so
(\ref{eq:GP_conv}) cannot hold. 

In analogy to (\ref{eq:U exc}), a natural generalisation of $q\in ERV_{\{1\}}$
would be $q\in ERV$, so for some real $\theta$ and some positive
function $g$,
\begin{equation}
\lim_{y\rightarrow\infty}\frac{q(y\lambda)-q(y)}{g(y)}=h_{\theta}(\lambda)\quad\forall\lambda>0.\label{eq:GW_explic}
\end{equation}

By a slight modification of Theorem \ref{thm:BdeHP}, (\ref{eq:GW_explic})
is equivalent to
\begin{equation}
\lim_{y\rightarrow\infty}\frac{\log(1-F(xg(y)+q(y)))}{y}=-h_{\theta}^{-1}(x)\quad\forall x\in h_{\theta}(\mathbb{R}^{+}).\label{eq:F_exp_logform}
\end{equation}

Furthermore, if $\theta>0$, then $q\in RV_{\{\theta\}}$ (\citet{Laurens  boek},
Theorem B.2.2(1)) and we may take $\theta q$ for $g$ in (\ref{eq:F_exp_logform}),
resulting in
\begin{equation}
\lim_{y\rightarrow\infty}\frac{\log(1-F(xq(y)))}{y}=-x^{1/\theta}\quad\forall x>0.\label{eq:wbl limit}
\end{equation}

The equivalent limit relations $q\in RV_{\{\theta\}}$ and (\ref{eq:wbl limit})
with $\theta>0$ are known as the Weibull tail limit; see \emph{e.g.}
\citet{Broniatowski}, \citet{Kluppelberg}, \citet{Gardes2011} and
references in the latter. Therefore, we will refer to both (\ref{eq:GW_explic})
and (\ref{eq:F_exp_logform}) as the \emph{Generalised Weibull} (GW)
tail limit. Among the distribution functions with a GW tail limit
are the Weibull, gamma, and normal distributions, but also lighter-tailed
distribution functions satisfying $q\in ERV_{(-\infty,0]}$. The latter
satisfy $\lim_{y\rightarrow\infty}q(y\xi)/q(y)=1$ for all $\xi>1$;
if $q\in ERV_{(-\infty,0)}$, then $q(\infty)$ is finite.

In view of the above, we will refer to (\ref{eq:logq_in_PI_explic})
and (\ref{eq:log-GW_surv_logform}) as the \emph{log-GW} tail limit.
Just as $q\in ERV$ generalises the condition $q\in ERV_{\{1\}}$
arising in the context of a GP limit and GP quantile approximation
in Proposition \ref{pro:spec-lim}(b), we can see $\log q\in ERV$
as a natural generalisation of the restrictive conditions $\log q\in ERV_{\{1\}}$
and $\log q\in ERV_{\{0\}}(1)$ in Proposition \ref{pro:spec-lim}(a)
and (b), respectively. Furthermore, the log-GW tail limit generalises
the GW tail limit: \textit{\emph{if $F$ satisfies }}\textit{$q\in ERV_{\{\theta\}}$}\textit{\emph{,
then it must also satisfy}}%
\footnote{As a reminder, we are always assuming that\textit{ $U(\infty)>1$.}%
}\textit{\emph{ $\log q\in ERV_{\{\min(\theta,0)\}}$; see }}\textit{e.g.}\textit{\emph{
\citet{Dekkers=000026E=000026dH} (Lemma 2.5) and Lemma \ref{lem:f-logf}(a)
in Subsection \ref{sub:Lemma}, included for convenience. Therefore,
the log-GW tail limit is the more important limit relation to consider
as regularity assumption. Nevertheless, the GW limit may be useful
}}in certain applications involving distribution functions with moderate
or light tails\textit{\emph{. In particular, if $\theta<0$, then
$\log q\in ERV_{\{\theta\}}$ if and only if $q\in ERV_{\{\theta\}}$;
see Lemma \ref{lem:f-logf}(c) in Subsection \ref{sub:Lemma}.}}

The following result supplements Proposition \ref{pro:spec-lim} by
describing the possible overlap of the domain of attraction of the
GP limit with the domains of attraction of the GW and log-GW limits.
It just states the plain results; an interpretation follows. 
\begin{theorem}
\label{thm:GW_in_gamma=00003D0} For $q:=U\circ\exp$,\end{theorem}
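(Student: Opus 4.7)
The statement of Theorem~\ref{thm:GW_in_gamma=00003D0} is cut off, but the label and surrounding discussion point to a characterisation of the attraction domain of the GW limit (\ref{eq:GW_explic}); I expect it to say in particular that $q \in ERV$ forces $U \in ERV_{\{0\}}$, so that the GW domain sits inside the Gumbel domain. My plan is to translate properties of $q$ into properties of $U$ via the substitution $y = \log t$, exploiting two standard facts: extended regular variation of $q$ with index $\rho \neq 0$ reduces to ordinary regular variation (of $q$ itself if $\rho > 0$, of $q(\infty) - q$ if $\rho < 0$), and the auxiliary function of an $ERV$ function is itself regularly varying with the same index.

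First, in the $\rho > 0$ case, $q \in RV_\rho$ admits the Karamata representation $q(y) = y^{\rho} L(y)$ with $L$ slowly varying, and $L(y+c)/L(y) \to 1$ for fixed $c$ gives
\[
U(t\lambda) - U(t) = q(\log t + \log\lambda) - q(\log t) \sim \rho\,\log\lambda\,(\log t)^{\rho-1} L(\log t),
\]
so $U \in ERV_{\{0\}}(w)$ with $w(t) = \rho (\log t)^{\rho-1} L(\log t)$. In the $\rho < 0$ case, the same argument applied to $q(\infty) - q \in RV_\rho$ and $U(\infty) - U$ yields $U \in ERV_{\{0\}}$ with auxiliary function tending to zero. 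Theorem~\ref{thm:BdeHP} may be invoked as a sanity check by re-expressing the GW limit as a tail limit on $1 - F$ and comparing it with the Gumbel-type tail limit obtained from $U \in ERV_{\{0\}}$.

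The main obstacle is the $\rho = 0$ case. Here $q$ is only $\Pi$-varying, not regularly varying, and the shift $y \mapsto y + \log\lambda$ cannot be treated by a Karamata representation as above. I would instead use the de~Haan integral representation
\[
q(y) - q(y_0) = \int_{y_0}^{y} \frac{g(s)}{s}\,(1 + o(1))\,ds
\]
for $\Pi$-varying $q$ with auxiliary $g$, substitute $s = \log t$ into the corresponding expression for $U(t\lambda) - U(t)$, and apply Karamata's theorem to extract leading-order asymptotics of the form $w(t)\log\lambda$, again giving $U \in ERV_{\{0\}}$. The subtlety is that the natural multiplicative perturbation $\mu = 1 + (\log\lambda)/y$ appearing in the ERV statement for $q$ tends to $1$ rather than to a fixed value, so the argument relies on local uniformity of the $q$-limit at $\lambda = 1$ and careful control of the error term in the de~Haan representation.
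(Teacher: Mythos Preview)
You have guessed the statement incorrectly, and the version you are trying to prove is in fact false. The theorem has four parts; the one you are aiming at is (a): \emph{if $U\in ERV$ and $q\in ERV$, then $U\in ERV_{\{0\}}$}. The hypothesis $U\in ERV$ is not redundant. Take $q(y)=y+\sin y$ (nondecreasing, and $q\in ERV_{\{1\}}$ with auxiliary function $g(y)=y$ since $(q(y\lambda)-q(y))/y\to\lambda-1$). Then $U(t)=\log t+\sin(\log t)$, and for $\lambda=e,e^{2}$ the ratio
\[
\frac{U(te^{2})-U(t)}{U(te)-U(t)}=\frac{2+\sin(\log t+2)-\sin(\log t)}{1+\sin(\log t+1)-\sin(\log t)}
\]
oscillates as $t\to\infty$, so no choice of $w$ can make $(U(t\lambda)-U(t))/w(t)$ converge to a nonconstant limit; hence $U\notin ERV$. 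This also pinpoints the gap in your Karamata argument for $\rho>0$: from $q(y)=y^{\rho}L(y)$ with $L$ slowly varying you use $L(y+c)/L(y)\to1$, which is correct, but you then need $L(y+c)/L(y)-1=o(1/y)$ to conclude $q(y+c)-q(y)\sim\rho c\,y^{\rho-1}L(y)$, and slowly varying functions need not satisfy this (the Karamata representation allows an arbitrary measurable factor $c(y)\to c>0$, whose additive increments are uncontrolled).

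The paper's proof of (a) goes in the opposite direction and uses the extra hypothesis: assuming $U\in ERV_{\{\gamma\}}$ with $\gamma>0$, the Potter bounds give $\log(q(y\lambda)/q(y))\asymp y(\lambda-1)\gamma$, so for $1<\iota<\xi$ the ratio $(q(y\xi)-q(y))/(q(y\iota)-q(y))$ grows at least like $e^{\delta y}$, contradicting $q\in ERV$ (which forces this ratio to tend to $h_{\rho}(\xi)/h_{\rho}(\iota)<\infty$); the case $\gamma<0$ is symmetric. Parts (b)--(d), which you did not address, concern the relationship between $q\in ERV$ and $\log q\in ERV$ and are handled by separate short arguments (essentially $q\in RV_{\{\rho\}}$ for $\rho>0$ giving $\log q\in ERV_{\{0\}}$, and for $\rho\le0$ the expansion $\log(1+x)\sim x$ applied to $g(y)h_{\rho}(\lambda)/q(y)\to0$).
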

\begin{lyxlist}{00.00.0000}
\item [{\textit{(a)}}] \textit{If $U\in ERV$ and $q\in ERV$, then $U\in ERV_{\{0\}}$.}
\item [{\textit{(b)}}] \textit{If $U\in ERV$ and $\log q\in ERV$, then}

\begin{lyxlist}{00.00.0000}
\item [{\textit{either}}] \textit{(i) $U\in ERV_{\{0\}}\; and\;\log q\in ERV_{(-\infty,1]},$ }
\item [{\textit{or}}] \textit{(ii) $U\in ERV_{(0,\infty)}\; and\;\log q\in ERV{}_{\{1\}}.$ }
\end{lyxlist}
\end{lyxlist}
\begin{proof}
See Subsection \ref{sub:proof-GW_in_gamma=00003D0}. 
\end{proof}

Theorem \ref{thm:GW_in_gamma=00003D0}(a) supplements Proposition
\ref{pro:spec-lim}(b) for the $\gamma=0$ case: the existence of
a GW limit excludes distribution functions with heavy and light GP
tail limits. Theorem \ref{thm:GW_in_gamma=00003D0}(b) identifies
which specific log-GW limits may coexist with a GP limit. Case (ii)
is the classical Pareto limit encountered in Proposition \ref{pro:spec-lim}(a).
Case (i) concerns lighter tails; note that it is possible that $U\in ERV_{\{0\}}$
and $\log q\in ERV_{\{1\}}$, an example being $q(y)=\exp(y/\log(y+1)-1)$.
By assertion (b), a GP limit with $\gamma<0$ excludes a log-GW limit. 

The domain of attraction of the log-GW limit covers a wide range of
tail behaviour. It includes the domain of attraction of the GW limit
described earlier, and the domain of attraction of the Pareto limit
with $\gamma>0$, but also the distribution functions satisfying $\log q\in ERV_{(0,1)}$,
with tails heavier than a Weibull tail but lighter than a Pareto tail.
As such, it achieves a ``unification'' of the Pareto and Weibull
tail limits sought in \citet{Gardes2011}. An example is the lognormal
distribution, which satisfies $\log q\in ERV_{\{\nicefrac{1}{2}\}}$;
neither (\ref{eq:GP_conv}), nor (\ref{eq:GP_conv_log}) holds for
this distribution function. Finally, the domain of attraction of the
log-GW limit also includes the very heavy-tailed distribution functions
satisfying $\log q\in ERV_{(1,\infty)}$, which do not have classical
limits. For these, the mean of the excess $(X-\alpha)\vee0$ over
any finite threshold $\alpha$ is infinite. 

Having now established the log-GW limit as a widely applicable regularity
assumption for approximation of high quantiles with probabilities
$(p_{n})$ satisfying (\ref{eq:p_n}), the following sections will
address the use of a log-GW tail as model for quantile approximation
and estimation.

\section{\label{sec:approx and error}Approximation and convergence}

The log-GW limit suggests to approximate a quantile $q(z)$ for $z>0$
by $q(y)\textrm{e}^{g(y)h_{\theta}(z/y)}$ for $y\in q^{-1}((0,\infty))$
and with $g$ and $\theta$ as in (\ref{eq:logq_in_PI_explic}). As
an introduction to the quantile estimator presented in the next section,
we will consider the following somewhat more general log-GW quantile
approximation:
\begin{equation}
\tilde{q}_{y}(z):=q(y)\textrm{e}^{\tilde{g}(y)h_{\tilde{\theta}(y)}(z/y)},\label{eq:logGW_tailmodel}
\end{equation}
with $\tilde{\theta}$ a real function and $\tilde{g}$ a positive
function, related to $q$ as follows: for some $\xi>1$,
\begin{equation}
\tilde{\theta}(y)-a_{\xi}(y)\rightarrow0\quad\textrm{and}\quad\tilde{g}(y)\sim(\log q(y\xi)-\log q(y))/h_{\tilde{\theta}(y)}(\xi)\quad\textrm{as}\; y\rightarrow\infty\label{eq:logGW_tailmodelassump}
\end{equation}
with for every $\iota\in(0,1)\cup(1,\infty)$,
\begin{equation}
a_{\iota}(y):=\frac{\log\left|\log q(y\iota^{2})-\log q(y\iota)\right|-\log\left|\log q(y\iota)-\log q(y)\right|}{\log\iota}.\label{eq:al_def}
\end{equation}

If $q$ has a second derivative $q''$, then $a_{\iota}(y)$ may be
regarded as a finite-difference approximation of $y(\log(y(\log q(y))'))'=1+yq''(y)/q'(y)-yq'(y)/q(y)$,
a scale-invariant measure of curvature. 

If $\log q\in ERV_{\{\theta\}}(g)$, then $\log q(\textrm{Id}\cdot\xi)-\log q\in RV_{\{\theta\}}$
for every $\xi>1$ and (\ref{eq:logGW_tailmodelassump}) is equivalent
to $\tilde{\theta}(y)\rightarrow\theta$ and $\tilde{g}(y)\sim g(y)$
as $y\rightarrow\infty$. The following is a straightforward consequence: 
\begin{proposition}
\label{pro:central}If $\log q\in ERV_{\{\theta\}}(g)$ and the real
function $\tilde{\theta}$ and positive function $\tilde{g}$ satisfy
(\ref{eq:logGW_tailmodelassump}), then $\tilde{q}_{y}$ defined by
(\ref{eq:logGW_tailmodel}) satisfies 
\begin{equation}
\lim_{y\rightarrow\infty}\sup_{\lambda\in[\Lambda^{-1},\Lambda]}\left|\frac{\log\tilde{q}_{y}(y\lambda)-\log q(y\lambda)}{g(y)}\right|=0\quad\forall\Lambda>1,\label{eq:conv_ql_rr_log}
\end{equation}
and if
\begin{equation}
\limsup_{y\rightarrow\infty}g(y)<\infty\label{eq:g_evt_bnd}
\end{equation}
(for example, if $q\in ERV$), then in addition, 
\begin{equation}
\lim_{y\rightarrow\infty}\sup_{\lambda\in[\Lambda^{-1},\Lambda]}\left|\frac{\tilde{q}_{y}(y\lambda)-q(y\lambda)}{q(y)g(y)}\right|=0\quad\forall\Lambda>1.\label{eq:conv_ql_rr}
\end{equation}
\end{proposition}
\begin{proof}
A proof of this standard result can be found in Subsection \ref{sub:proof-procentral}.\end{proof}

\begin{remark}
\label{rem:R1}Eq. (\ref{eq:conv_ql_rr_log}) remains valid when $g(y)$
in the denominator is replaced by $\log q(y)$ or by $\log q(y\xi)-\log q(y)$
for any $\xi\in(0,\infty)\setminus\{1\}$, because by (\ref{eq:logq_in_PI_explic}),
\begin{equation}
g(y)/\bigl|\log q(y\xi)-\log q(y)\bigr|=O(1)\label{eq:g_bound}
\end{equation}
as $y\rightarrow\infty$, and therefore also $g(y)/\log q(y)=O(1)$.

Condition (\ref{eq:g_evt_bnd}) implies that $\theta\leq0$ in (\ref{eq:logq_in_PI_explic})
and therefore, that $q$ is of bounded increase (see \citet{Bingham},
Section 2.1); \emph{vice versa}, bounded increase of $q$ implies
(\ref{eq:g_evt_bnd}) by (\ref{eq:logq_in_PI_explic}). If (\ref{eq:g_evt_bnd})
holds, then (\ref{eq:conv_ql_rr_log}) and (\ref{eq:conv_ql_rr})
remain valid when $g(y)$ is replaced by $1$. Furthermore, $q(y)g(y)$
in (\ref{eq:conv_ql_rr}) can be replaced by $q(y\xi)-q(y)$ for any
$\xi\in(0,\infty)\setminus\{1\}$; see Subsection \ref{sub:notes_Remark}.
Furthermore, if $q(\infty)<\infty$, then we may also replace $q(y)g(y)$
in (\ref{eq:conv_ql_rr}) by $q(\infty)-q(y\eta)$ for any $\eta>0$;
see Subsection \ref{sub:notes_Remark}. 
\end{remark}
The normalisation of the quantile approximation error in (\ref{eq:conv_ql_rr_log})
is model-dependent. Whether (\ref{eq:conv_ql_rr}) is applicable,
and which model-independent normalisations may be substituted for
$g$ in (\ref{eq:conv_ql_rr_log}) and (\ref{eq:conv_ql_rr}), depends
on tail weight: \emph{i.e.}, on whether $q$ is of bounded increase;
see Remark \ref{rem:R1}. As an alternative, the error in a quantile
approximation may be expressed in terms of a mismatch between the
probabilities of exceedance of the quantile and of its approximation.
As we will see shortly, this can be done in such a way that a single
model-independent notion of convergence holds if $\log q\in ERV$. 

There may also be other reasons for considering probability-based
quantile approximation and estimation errors. For example, in the
context of structural reliability analysis and safety engineering
(\textit{e.g.} flood protection, tall buildings, bridges, offshore
structures, etc.), the required overall safety level constrains a
design; usually, it takes the form of a maximum tolerated failure
rate, fixed in legislation or in rules issued by regulators or classification
societies. Within this context, errors in estimates of load quantiles
are often viewed in terms of equivalent errors in frequency of exceedance. 

\begin{singlespace}
In the present context, a natural expression of the mismatch between
$1-F(\tilde{q}_{y}(z))$ and $1-F(q(z))$ is
\begin{equation}
\tilde{\nu}_{y}(z):=\frac{q^{-1}(\tilde{q}_{y}(z))}{q^{-1}(q(z))}-1=\frac{\log(1-F(\tilde{q}_{y}(z)))}{\log(1-F(q(z)))}-1.\label{eq:nu_def}
\end{equation}

\end{singlespace}

Because $F$ may be constant over some interval, it is possible that
$\tilde{\nu}_{y}(z)=0$ while $\tilde{q}_{y}(z)>q(z)$. If $q(\infty)<\infty$
and $\tilde{q}_{y}(z)>q(\infty)$, then $\tilde{\nu}_{y}(z)=\infty$.
If $F$ is continuous, then $-\log(1-F(q(z)))=q^{-1}(q(z))=z$ in
(\ref{eq:nu_def}). 

For the log-GW approximation (\ref{eq:logGW_tailmodel}), convergence
of $\tilde{\nu}_{y}(y\lambda)$ to zero as $y\rightarrow\infty$ for
$\lambda>0$ is a similarly weak notion of convergence as convergence
to the log-GW limit in (\ref{eq:log-GW_surv_logform}). In fact, if
$F$ is continuous, then with $\tilde{\theta}(y)=\theta$ and $\tilde{g}(y)=g(y)$
in (\ref{eq:logGW_tailmodel}), the log-GW limit can be written alternatively
as $\lim_{y\rightarrow\infty}\lambda\tilde{\nu}_{y}(y\lambda)=0$
for all $\lambda>0$. A somewhat more general result is the following.
\begin{theorem}
\label{thm: PHI-omega-nu}If $\log q\in ERV$ and real functions $\tilde{\theta}$
and $\tilde{g}$, $\tilde{g}$ positive, satisfy (\ref{eq:logGW_tailmodelassump}),
then $\tilde{q}_{y}$ defined by (\ref{eq:logGW_tailmodel}) satisfies
\begin{equation}
\lim_{y\rightarrow\infty}\sup_{\lambda\in[\Lambda^{-1},\Lambda]}\left|\tilde{\nu}_{y}(y\lambda)\right|=0\quad\forall\Lambda>1.\label{eq:conv_nul}
\end{equation}
\end{theorem}
\begin{proof}
See Subsection \ref{sub:ProofProPHI}. 
\end{proof}

Alternatively, one may want to consider a stronger notion of convergence
such as
\begin{equation}
\lim_{y\rightarrow\infty}\frac{1-F(q(y\lambda))}{1-F(\tilde{q}_{y}(y\lambda))}=1\qquad\forall\lambda\geq1.\label{eq:strong-pconv}
\end{equation}

If $\lim_{y\rightarrow\infty}y^{-1}\log(1-F(q(y)))=-1$%
\footnote{This very weak condition is ensured by, for example, (\ref{eq:log-GW_surv_logform}),
or (\ref{eq:GP_surv}), or continuity of $F$.%
}, then by taking the logarithm, (\ref{eq:strong-pconv}) can be seen
to be equivalent to $\lim_{y\rightarrow\infty}y\tilde{\nu}_{y}(y\lambda)=0$
for all $\lambda\geq1$.\emph{ }Ensuring this convergence rate condition
requires strengthening of the assumption of a log-GW limit. We will
discuss this further within the context of a specific estimator in
the next section.

\section{\label{sec:estimators}A simple high quantile estimator}

To demonstrate the potential of the alternative regularity condition
for estimation of high quantiles, this section introduces a quantile
estimator closely related to the log-GW approximation (\ref{eq:logGW_tailmodel})
and presents consistency results. 

Consider a sequence of independent random variables $(X_{n})$ with
$X_{i}\sim F$ for all $i\in\mathbb{N}$. Let $X_{k,n}$ denote the
$k$-th lowest order statistic out of $\{X_{1},..,X_{n}\}$. Let $\iota>1$
be fixed, and let%
\footnote{For notational convenience, we write some sequences as functions on
$\mathbb{N}$.%
} $k_{2}:\mathbb{\mathbb{N}\rightarrow N}$ be nondecreasing and such
that $k_{2}(n)\in\{1,...,n-1\}$ for all $n\in\mathbb{N}$. Define
for $j\in\{0,1\}$,
\begin{equation}
k_{j}(n):=\left\lfloor (k_{2}(n)/n)^{\iota^{j-2}}n\right\rfloor .\label{eq:k_j}
\end{equation}

A simple log-GW-based estimator for a quantile $q(z)$ with probability
of exceedance $\textrm{e}^{-z}$ is $\hat{q}_{n}(z)$, defined for
every $z>0$ and $n\in\mathbb{N}$ such that $X_{n-k_{0}(n)+1,n}>0$
by
\begin{equation}
\hat{q}_{n}(z):=X_{n-k_{0}(n)+1,n}\exp\left(\hat{g}_{n}h_{\hat{\theta}_{n}}(z/y_{n})\right)\label{eq:def_qhat}
\end{equation}
with
\begin{equation}
\hat{\theta}_{n}:=\frac{\log\log\frac{X_{n-k_{2}(n)+1,n}}{X_{n-k_{1}(n)+1,n}}-\log\log\frac{X_{n-k_{1}(n)+1,n}}{X_{n-k_{0}(n)+1,n}}}{\log\iota},\label{eq:def_ahat}
\end{equation}
\begin{equation}
\hat{g}_{n}:=\frac{\log\frac{X_{n-k_{1}(n)+1,n}}{X_{n-k_{0}(n)+1,n}}}{h_{\hat{\theta}_{n,\iota}}(\iota)},\label{eq:def_ghat}
\end{equation}
and
\begin{equation}
y_{n}:=\log(n/k_{0}(n)).\label{eq:yn_def}
\end{equation}

This estimator can be regarded as a straightforward application of
the approximation (\ref{eq:logGW_tailmodel}) to the sampling distribution
of $\{X_{1},..,X_{n}\}$ instead of $F$, taking
\begin{equation}
g_{\iota}(y):=(\log q(y\iota)-\log q(y))/h_{a_{\iota}(y)}(\iota)\label{eq:g_i}
\end{equation}
for $\tilde{g}(y)$ and $a_{\iota}(y)$ for $\tilde{\theta}(y)$.
Assume that $k_{2}(n)/n\rightarrow0$ and $k_{2}(n)\rightarrow\infty$
as $n\rightarrow\infty$. Then by (\ref{eq:k_j}), as $\iota>1$,
also $k_{j}(n)/n\rightarrow0$ and $k_{j}(n)\rightarrow\infty$ as
$n\rightarrow\infty$ for $j=1$ and $j=0$. Moreover, if $k_{2}$
is chosen to satisfy
\begin{equation}
\limsup_{n\rightarrow\infty}\frac{\log k_{2}(n)}{\log n}=:c<1,\label{eq:k2_cond}
\end{equation}
then by (\ref{eq:k_j}), $\limsup_{n\rightarrow\infty}(\log k_{0}(n))/\log n=1+\iota^{-2}(c-1)$,
so
\begin{equation}
\liminf_{n\rightarrow\infty}y_{n}/\log n=(1-c)\iota^{-2}.\label{eq:liminf_y}
\end{equation}

Therefore, for every $T\geq1$, eventually
\begin{equation}
[T^{-1}\log n,T\log n]\subset[\lambda^{-1}y_{n},\lambda y_{n}]\quad\forall\lambda>T\iota^{2}/(1-c),\label{eq:yn_over_logn}
\end{equation}
and as a result, $-\log p_{n}$ with $(p_{n})$ as in (\ref{eq:p_n})
is eventually in the interval $[\lambda^{-1}y_{n},\lambda y_{n}]$
for some $\lambda>1$. 

If $\log X$ were replaced by $X$ in (\ref{eq:def_qhat})-(\ref{eq:def_ghat})
and (\ref{eq:k_j}) were modified to $k_{j}(n):={\scriptstyle \left\lfloor k_{2}(n)\iota^{2-j}\right\rfloor }$
and (\ref{eq:yn_def}) to $y_{n}:=n/k_{0}(n)$, then with $\iota=2$,
(\ref{eq:def_ahat}) would become the \citet{PickandsIII} estimator
for the extreme value index $\gamma$, and (\ref{eq:def_qhat}) would
become an estimator for $U(z)$. Pickands' estimator is known to be
inaccurate in comparison to other commonly used estimators; see \textit{e.g.}
\citet{Laurens  boek}. The estimator $\hat{q}_{n}$, also based on
only three order statistics, was chosen as an example here because
of its simplicity.

Analogous to $\tilde{\nu}_{y}(z)$ in (\ref{eq:nu_def}), define the
probability-based quantile estimation error
\begin{equation}
\hat{\nu}_{n}(z):=\frac{q^{-1}(\hat{q}_{n}(z))}{q^{-1}(q(z))}-1=\frac{\log(1-F(\hat{q}_{n}(z)))}{\log(1-F(q(z)))}-1.\label{eq:def_nuhat}
\end{equation}

\begin{theorem}
\label{thm:Pickands-like_random}Let $k_{2}:\mathbb{\mathbb{N}\rightarrow N}$
satisfy (\ref{eq:k2_cond}) and $k_{2}(n)/\log\log n\rightarrow\infty$
as $n\rightarrow\infty$. Consider $\hat{q}_{n}$, $\hat{\theta}_{n}$
and $\hat{g}_{n}$ defined by (\ref{eq:k_j})-(\ref{eq:yn_def}) for
some $\iota>1$. If $\log q\in ERV_{\{\theta\}}(g)$, then\textup{
\begin{equation}
\hat{\theta}_{n}\rightarrow\theta\quad and\quad\hat{g}_{n}/g(y_{n})\rightarrow1\quad a.s.\label{eq:conv_ahat}
\end{equation}
}and for every $T>1$ (see (\ref{eq:nu_def})),
\begin{equation}
\sup_{\tau\in[T^{-1},T]}\left|\hat{\nu}_{n}(\tau\log n)\right|\rightarrow0\quad\textrm{a.s.},\label{eq:conv_nuhat}
\end{equation}
\begin{equation}
\sup_{\tau\in[T^{-1},T]}\left|\frac{\log\hat{q}_{n}(\tau\log n)-\log q(\tau\log n)}{g(y_{n})}\right|\rightarrow0\quad a.s.\label{eq:conv_logqlhat}
\end{equation}
and if (\ref{eq:g_evt_bnd}) holds (for example, if $q\in ERV$),
then in addition,
\begin{equation}
\sup_{\tau\in[T^{-1},T]}\left|\frac{\hat{q}_{n}(\tau\log n)-q(\tau\log n)}{q(y_{n})g(y_{n})}\right|\rightarrow0\quad a.s.\label{eq:conv_qlhat}
\end{equation}
\end{theorem}
\begin{proof}
The proof is found in Subsection \ref{sub:Proof-of-Pickands-like}.
\end{proof}

Theorem \ref{thm:Pickands-like_random} establishes almost sure convergence
of very high quantile estimates for probabilities of exceedance of
$n^{-\tau}$ uniformly for all $\tau$ in an arbitrary compact subset
of $(0,\infty)$ if $\log q\in ERV$. 
\begin{remark}
Remark \ref{rem:R1} about the normalisation in (\ref{eq:conv_ql_rr_log})
and (\ref{eq:conv_ql_rr}) carries over to (\ref{eq:conv_logqlhat})
and (\ref{eq:conv_qlhat}). 
\end{remark}
For the analysis of the asymptotic distributions of errors, the assumption
$\log q\in ERV_{\{\theta\}}$ in Theorem \ref{thm:Pickands-like_random}
will be strengthened somewhat. We assume that the derivative $q'$
of $q$ exists, and 
\begin{equation}
(\log q)'=q'/q\in RV_{\{\theta-1\}},\label{eq:q_diff_cond}
\end{equation}
which implies $\log q\in ERV_{\{\theta\}}(\bar{g})$ with
\begin{equation}
\bar{g}(y)=yq'(y)/q(y).\label{eq:gbar}
\end{equation}

If it is given that $\log q\in ERV_{\{\theta\}}$ and that $q$ is
differentiable, several seemingly weak conditions on $q'$ are known
which ensure (\ref{eq:q_diff_cond}); see \emph{e.g.} \citet{Bingham}
(Theorems 1.7.5 and 3.6.10).

Let $g_{\iota}$ be defined by (\ref{eq:g_i}), $a_{\iota}$ be defined
by (\ref{eq:al_def}), and let
\begin{equation}
\kappa_{\theta}(\lambda,\iota):=\frac{\partial(h_{\theta}(\lambda)/h_{\theta}(\iota))}{\partial\theta}=\begin{cases}
\frac{1}{\iota^{\theta}-1}\left(\lambda^{\theta}\log\lambda-\frac{\lambda^{\theta}-1}{\iota^{\theta}-1}\iota^{\theta}\log\iota\right) & \textrm{if\:}\theta\neq0\\
\frac{1}{2}\log\lambda\left(\frac{\log\lambda}{\log\iota}-1\right) & \textrm{if\:}\theta=0
\end{cases}\label{eq:kappa_def}
\end{equation}
for all real $\theta$, $\iota\in(0,1)\cup(1,\infty)$ and $\lambda>0$;
note that $\kappa_{\theta}(1,\iota)=\kappa_{\theta}(\iota,\iota)=0$.
We will first consider limiting distribution functions of suitably
normalised deviations of the estimates $\hat{\theta}_{n}$, $\hat{q}_{n}$
and $\hat{\nu}_{n}$ from their deterministic analogues.
\begin{theorem}
\label{thm:logGW_normal}If (\ref{eq:q_diff_cond}) holds in addition
to the assumptions for Theorem \ref{thm:Pickands-like_random}, then
\begin{equation}
Z_{n}:=\bigl(\hat{\theta}_{n}-a_{\iota}(y_{n})\bigr)y_{n}\sqrt{k_{2}(n)}h_{\theta}(\iota)\overset{d}{\rightarrow}N(0,(\iota^{\theta-2}/\log\iota)^{2})\label{eq:rho_asnormal}
\end{equation}
and with $\tilde{\theta}=a_{\iota}$ and $\tilde{g}=g_{\iota}$ in
(\ref{eq:logGW_tailmodel}), for all $T>1$,
\begin{equation}
\sup_{z\in[T^{-1}\log n,T\log n]}\left|\bigl(\hat{\nu}_{n}(z)-\tilde{\nu}_{y_{n}}(z)\bigr)y_{n}\sqrt{k_{2}(n)}-\Bigl(\frac{z}{y_{n}}\Bigr)^{-\theta}\kappa_{\theta}\Bigl(\frac{z}{y_{n}},\iota\Bigr)Z_{n}\right|\rightarrow0\quad a.s.\label{eq:nu_normal}
\end{equation}
and 
\begin{equation}
\sup_{z\in[T^{-1}\log n,T\log n]}\left|\frac{\log\hat{q}_{n}(z)-\log\tilde{q}_{y_{n}}(z)}{g(y_{n})}y_{n}\sqrt{k_{2}(n)}-\kappa_{\theta}\Bigl(\frac{z}{y_{n}},\iota\Bigr)Z_{n}\right|\rightarrow0\quad a.s.\label{eq:q_normal}
\end{equation}
for every positive function $g$ satisfying (\ref{eq:logq_in_PI_explic}).\end{theorem}
\begin{proof}
See Subsection \ref{sub:Proof-of-Theorem-GWnormal}.
\end{proof}

Under an additional convergence rate assumption, the previous result
implies asymptotic normality of the estimation errors $\hat{\theta}_{n}-\theta$,
$\hat{\nu}_{n}$ and $\log\hat{q}_{n}-\log q$:
\begin{corollary}
\label{cor:normality}If in addition to the assumptions for Theorem
\ref{thm:logGW_normal}, (\ref{eq:q_diff_cond}) is strengthened to
\begin{equation}
\frac{q'(y\lambda)/q(y\lambda)}{q'(y)/q(y)}=\lambda^{\theta-1}(1+o(1)/\phi(y))\quad as\quad y\rightarrow\infty\quad\forall\lambda>1\label{eq:conv_rate}
\end{equation}
with $\phi$ some positive increasing function satisfying $\lim_{y\rightarrow\infty}\phi(y)/(y\sqrt{\log y})=\infty$,
and if $k_{2}$ satisfies $k_{2}(n)=O(\phi^{2}(y_{n})y_{n}^{-2})$,
then
\begin{equation}
Z_{n}^{0}:=\bigl(\hat{\theta}_{n}-\theta\bigr)y_{n}\sqrt{k_{2}(n)}h_{\theta}(\iota)\overset{d}{\rightarrow}N(0,(\iota^{\theta-2}/\log\iota)^{2}),\label{eq:rho_asnormal-1}
\end{equation}
and for all $T>1$,
\begin{equation}
\sup_{z\in[y_{n},T\log n]}\left|\hat{\nu}_{n}(z)\: y_{n}\sqrt{k_{2}(n)}-\Bigl(\frac{z}{y_{n}}\Bigr)^{-\theta}\kappa_{\theta}\Bigl(\frac{z}{y_{n}},\iota\Bigr)Z_{n}^{0}\right|\rightarrow0\quad a.s.\label{eq:nu_normal-1}
\end{equation}
and
\begin{equation}
\sup_{z\in[y_{n},T\log n]}\left|\frac{\log\hat{q}_{n}(z)-\log q(z)}{g(y_{n})}y_{n}\sqrt{k_{2}(n)}-\kappa_{\theta}\Bigl(\frac{z}{y_{n}},\iota\Bigr)Z_{n}^{0}\right|\rightarrow0\quad a.s.\label{eq:q_normal-1}
\end{equation}
for every positive function $g$ satisfying (\ref{eq:logq_in_PI_explic}).\end{corollary}
\begin{proof}
See Subsection \ref{sub:Proof-of-Theorem-GWnormal}.
\end{proof}

\begin{remark}
Eq. (\ref{eq:nu_normal}) and (\ref{eq:k2_cond}) imply that $y_{n}\sqrt{k_{2}(n)}(\hat{\nu}_{n}(y_{n}\lambda)-\tilde{\nu}_{y_{n}}(y_{n}\lambda))$
is asymptotically normal with zero mean and variance equal to $((\iota^{\theta-2}/\log\iota)\lambda^{-\theta}\kappa_{\theta}\left(\lambda,\iota)\right)^{2}$
for every $\lambda>0$. Similar comments apply to (\ref{eq:q_normal}),
(\ref{eq:nu_normal-1}) and (\ref{eq:q_normal-1}). 
\end{remark}

\begin{remark}
If a function $\phi$ satisfying the conditions of Corollary \ref{cor:normality}
exists, then a $k_{2}$ satisfying $k_{2}(n)=O(\phi^{2}(y_{n})y_{n}^{-2})$,
$k_{2}(n)/\log\log n\rightarrow\infty$ as $n\rightarrow\infty$ and
(\ref{eq:k2_cond}) can always be found; for example, for some $\alpha>0$,
one can take for $k_{2}(n)$ the smallest integer $k$ satisfying
$k\geq\max(1,{\scriptstyle \left\lfloor \min(\textrm{e}^{\alpha y},\phi^{2}(y)y^{-2})\right\rfloor })$
with $y=\iota^{-2}\log(n/k)$. 
\end{remark}

\begin{remark}
\label{rem:strong_conv}Using (\ref{eq:def_nuhat}), it can be seen
that (\ref{eq:nu_normal-1}) implies 
\begin{equation}
\sup_{z\in[y_{n},T\log n]}\biggl|\frac{1-F(q(z))}{1-F(\hat{q}_{n}(z))}-1\biggr|\overset{p}{\rightarrow}0\qquad\forall T>1,\label{eq:strong-conv}
\end{equation}
representing a strong notion of convergence of the probability of
exceedance of the quantile estimate to its target value (this may
be compared to the comment following Theorem \ref{thm: PHI-omega-nu}).
Furthermore, if $g(y)/y$ is eventually bounded as $y\rightarrow\infty$
(so the tail is not heavier than a typical Pareto tail), then (\ref{eq:q_normal-1})
implies that for all $T>1$, $\sup_{z\in[y_{n},T\log n]}\left|\hat{q}_{n}(z)/q(z)-1\right|\overset{p}{\rightarrow}0$.
\end{remark}

Convergence rate assumptions like (\ref{eq:conv_rate}) with $\phi$
some function increasing to infinity are commonly made%
\footnote{Often in the form of a second-order ERV assumption.%
} to derive asymptotic normality of parameter and quantile estimators
under the condition that the rate of increase of $k_{2}$ (or more
in general, the number of upper order statistics controlling the accuracy
of the estimator) is restricted by $\phi$ in some manner. 

For the estimator $\hat{q}_{n}$, the convergence rate assumption
is rather restrictive: corollary \ref{cor:normality} requires that
$\phi(y)/y$ must tend to infinity as $y\rightarrow\infty$. The reason
for this is that each factor $\sqrt{k_{2}(n)}$ in (\ref{eq:rho_asnormal})-(\ref{eq:q_normal})
is preceded by a factor $y_{n}$, which can only increase when reducing
$k_{2}(n)$. While these factors contribute to a low large-sample
variability for this estimator, they make it more difficult or impossible
to ``mask'' bias by reducing $k_{2}(n)$. 

This limitation is due to the particular formulation of this estimator.
Alternative estimators exist which satisfy expressions analogous to
(\ref{eq:rho_asnormal})-(\ref{eq:q_normal}) but without the factors
$y_{n}$, thus weakening the restrictions to be imposed on $\phi$
in (\ref{eq:conv_rate}) for establishing asymptotic normality. For
the special case of a Weibull tail limit, \emph{i.e.}, $\theta=0$
and $g(y)\rightarrow g_{\infty}\in(0,\infty)$ in (\ref{eq:logq_in_PI_explic}),
examples are the estimators for the Weibull tail index $g_{\infty}$
and associated quantile estimators in \citet{G=000026G}. Preliminary
work suggests that this type of estimator may be extended to log-GW
and GW-based quantile estimators under the appropriate tail limits. 

Alternatively, one may try to correct quantile estimates for bias,
which may relax restrictions on $k_{2}$. This would involve extending
the model $\tilde{q}_{y_{n}}$ with $\tilde{\theta}=a_{\iota}$ and
$\tilde{g}=g_{\iota}$ in (\ref{eq:logGW_tailmodel}) and its estimator
$\hat{q}_{n}$ to make $(\log\tilde{q}_{y_{n}}(z)-\log q(y_{n}))/g(y_{n})$
vanish more rapidly with increasing $n$, without substantially slowing
the rate of absolute decrease of $(\log\hat{q}_{n}(z)-\log\tilde{q}_{y_{n}}(z))/g(y_{n})$
in (\ref{eq:q_normal}). Within the context of the GP tail limit and
GP-based high quantile estimation, estimation of a model of second-order
ERV to correct quantile estimates has been developed to an advanced
level; see \emph{e.g.} \citet{Li} and \citet{Cai}. More limited
progress has been made within the context of the Weibull tail limit.
For example, bias correction in \citet{Diebolt_a} can produce asymptotically
normal zero-mean estimation errors with the same variance as obtained
with the asymptotically biased uncorrected estimator%
\footnote{A zero mean value is required for construction of confidence intervals.%
}. These developments suggest that bias correction could be successful
in the context of the log-GW limit and log-GW-based quantile estimation.

\section{\label{sec:simulations}Simulations}

As an illustration, the log-GW-based quantile estimator $\hat{q}_{n}$
defined in (\ref{eq:k_j})-(\ref{eq:yn_def}) was applied with $\iota=2$
to simulated samples of \emph{iid} random variables to estimate very
high quantiles with a probability of exceedance of ${\scriptstyle n^{-2}}$
. For comparison, a GP-based quantile estimator was applied to the
same data. For this purpose, the moment estimator of \citet{Dekkers=000026E=000026dH}
and \citet{deH=000026R} was chosen; see also \citet{Laurens  boek}
(3.5, 4.2 and 4.3.2). With $k:\mathbb{N}\rightarrow\mathbb{N}$ such
that $k(n)\in\{1,..,n\}$ and $X_{n-k(n)+1,n}>0$ for $n$ large enough,
it is given by 
\[
\hat{q}_{n}^{m}(z):=X_{n-k(n)+1,n}+\hat{\sigma}_{n}h_{\hat{\gamma}_{n}^{m}}\left(\textrm{e}^{z}k(n)/n\right)
\]
\[
\hat{\gamma}_{n}^{m}:=M_{n}^{(1)}(k(n))+\hat{\gamma}_{n}^{-},\quad\hat{\sigma}_{n}:=X_{n-k(n)+1,n}M_{n}^{(1)}(k(n))(1-\hat{\gamma}_{n}^{-}),
\]
\[
\hat{\gamma}_{n}^{-}:=1-\frac{1}{2}\left(1-(M_{n}^{(1)}(k(n)))^{2}/M_{n}^{(2)}(k(n))\right)^{-1}
\]
and
\[
M_{n}^{(j)}(k):=\frac{1}{k-1}\sum_{i=1}^{k-1}\biggl(\log\frac{X_{n-i+1,n}}{X_{n-k+1,n}}\biggr)^{j}.
\]

This estimator is applicable to all $\gamma\in\mathbb{R}$, it is
accurate in comparison to other well-known estimators, and its bias
is small; see \emph{e.g.} \citet{Laurens  boek} (3.7.1). 

For each distribution function considered and each $n$ in $\{{\scriptstyle 2^{5}},{\scriptstyle 2^{6}},.....,{\scriptstyle 2^{16}}\}$,
1000 random samples were generated. The estimators were applied with
for each $n$, $k_{2}(n)$ and $k(n)$ chosen to minimise the empirical
mean square of $\log(\hat{\nu}_{n}(2\log n)+1)$ and of $\log(\hat{\nu}_{n}^{m}(2\log n)+1)$,
respectively. The reason for using say, $\log(\hat{\nu}_{n}+1)$ instead
of $\hat{\nu}_{n}$ is that its empirical distributions tend to be
more symmetrical with fewer outliers; note that $\log(\hat{\nu}_{n}+1)$
can replace $\hat{\nu}_{n}$ in Theorems \ref{thm:Pickands-like_random}
and \ref{thm:logGW_normal} and Corollary \ref{cor:normality}. The
reason for comparing estimates at the optimal $k_{2}(n)$ and $k(n)$
for each $n$ is to avoid biasing the comparison in favour of either
estimator. In addition, all quantile estimates were constrained from
below by the sample maxima.

Both the normal and lognormal distribution function satisfy $U\in ERV_{\{0\}}$\emph{.}
For the normal distribution, $q(y)\sim\sqrt{2y}$ as $y\rightarrow\infty$,
so $q\in ERV_{\{\nicefrac{1}{2}\}}$ and by Lemma \ref{lem:f-logf}(a)
in Subsection \ref{sub:Lemma}, $\log q\in ERV_{\{0\}}$; moreover,
$q'/q\in ERV_{\left\{ -1\right\} }$. Similarly, for the lognormal
distribution, it can be shown that $\log q\in ERV_{\{\nicefrac{1}{2}\}}$
and $q'/q\in ERV_{\left\{ -1/2\right\} }$. Therefore, Theorems \ref{thm:Pickands-like_random}
and \ref{thm:logGW_normal} apply to both distribution functions.
However, by Proposition \ref{pro:spec-lim}(b), neither satisfies
(\ref{eq:GP_conv}), and the lognormal does not even satisfy (\ref{eq:GP_conv_log}),
so we would not expect good performance of a GP-based quantile estimator
for $U(n^{2})$. 

\begin{figure}
\protect\caption{High quantile estimates for probabilities of exceedance of $n^{-2}$
on simulated independent standard lognormal samples based on GP (top)
and log-GW (bottom) based estimators as functions of $n$ (see text).
Diamonds/vertical bars: median of estimates (black) with 90\% intervals.
Left (a): quantile estimates, with target quantiles $U(n^{2})$ (dashed)
and approximate thresholds $U(n/k(n))$ and $U(n/k_{0}(n))$ (squares).
Centre (b): parameter estimates $\hat{\gamma}_{n}^{m}$ (top) and
$\hat{\theta}_{n}$ (bottom), dashed lines indicating the indices
$\gamma$ and $\theta$. Right (c): errors $\hat{\nu}_{n}^{m}$ (top)
and $\hat{\nu}_{n}$ (bottom). For log-GW only: quantile approximations
(-) and asymptotic 90\% interval bounds (-.). }
\label{Lognormal_many}

\begin{centering}
\includegraphics[width=3.8cm,height=3.8cm]{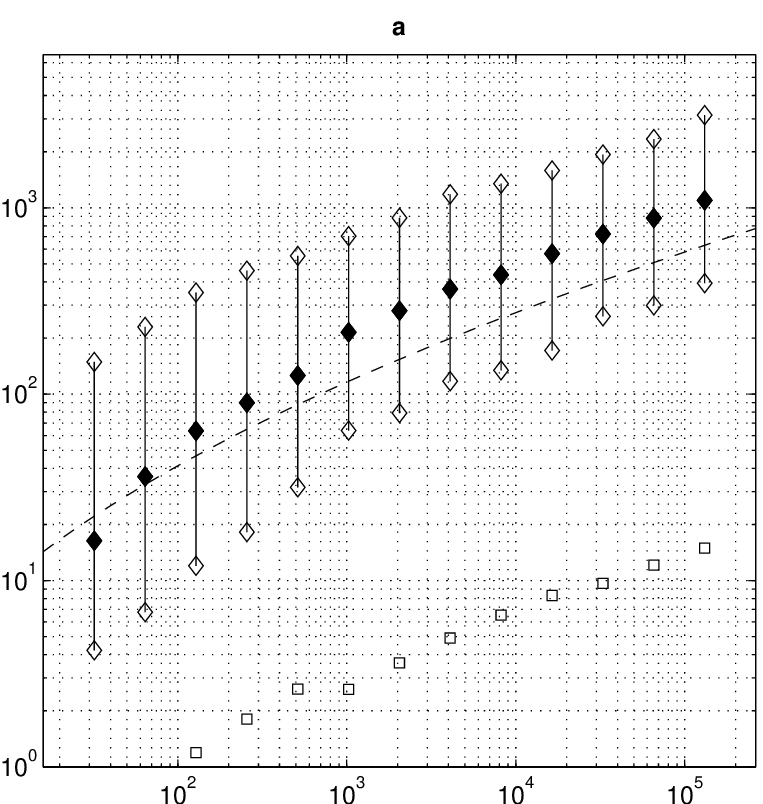}\includegraphics[width=3.8cm,height=3.8cm]{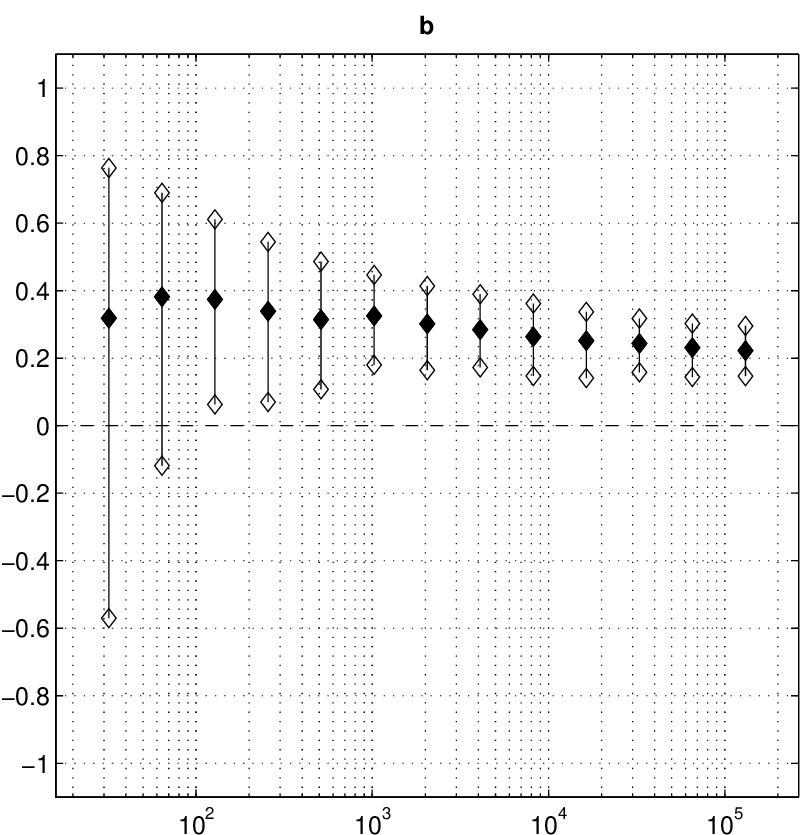}\includegraphics[width=3.8cm,height=3.8cm]{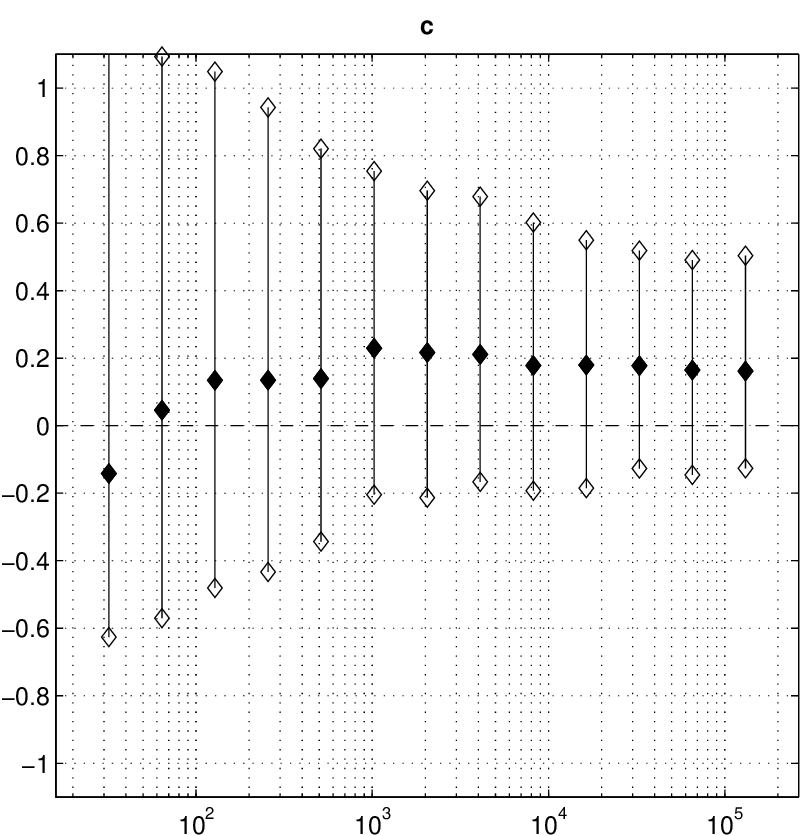}
\par\end{centering}

\centering{}\includegraphics[width=3.8cm,height=3.8cm]{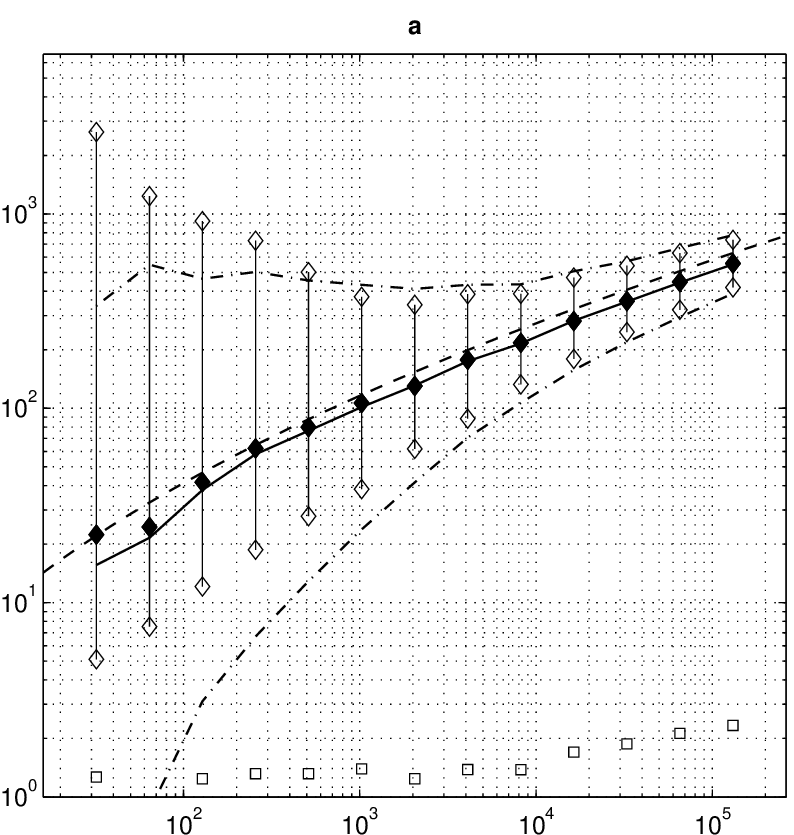}\includegraphics[width=3.8cm,height=3.8cm]{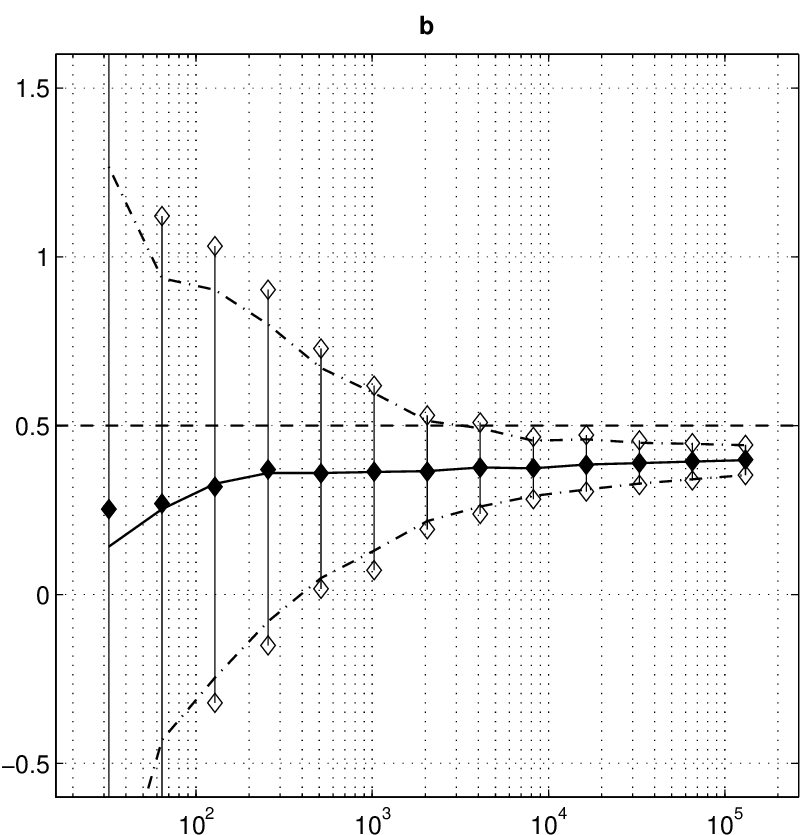}\includegraphics[width=3.8cm,height=3.8cm]{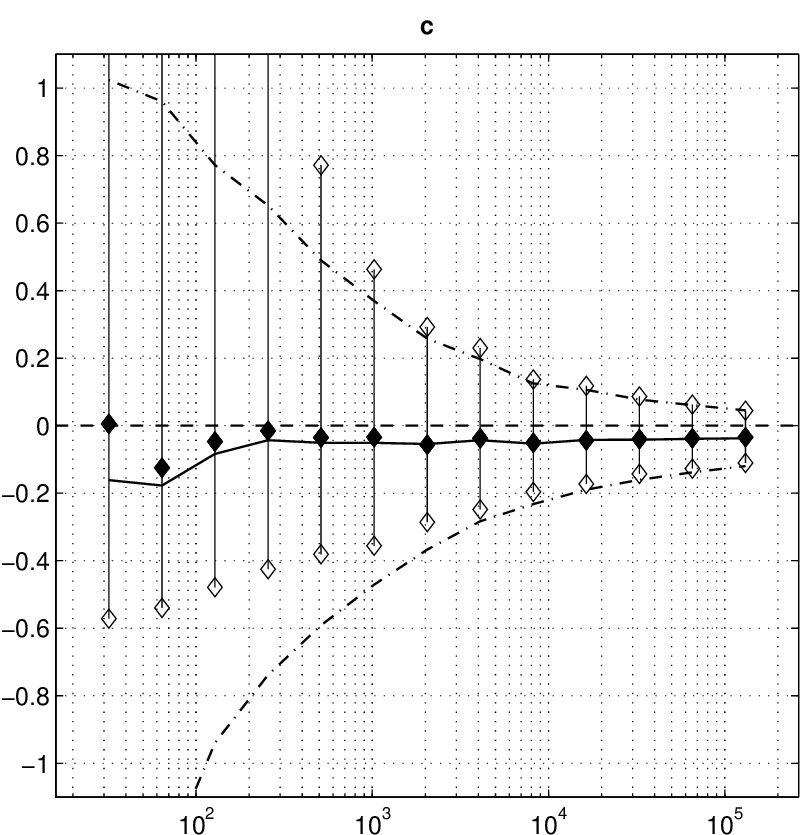}
\end{figure}
\begin{figure}
\protect\caption{As Figure \ref{Lognormal_many}, but for the standard normal distribution
instead of the lognormal.}
\label{Normal_many}

\begin{centering}
\includegraphics[width=3.8cm,height=3.8cm]{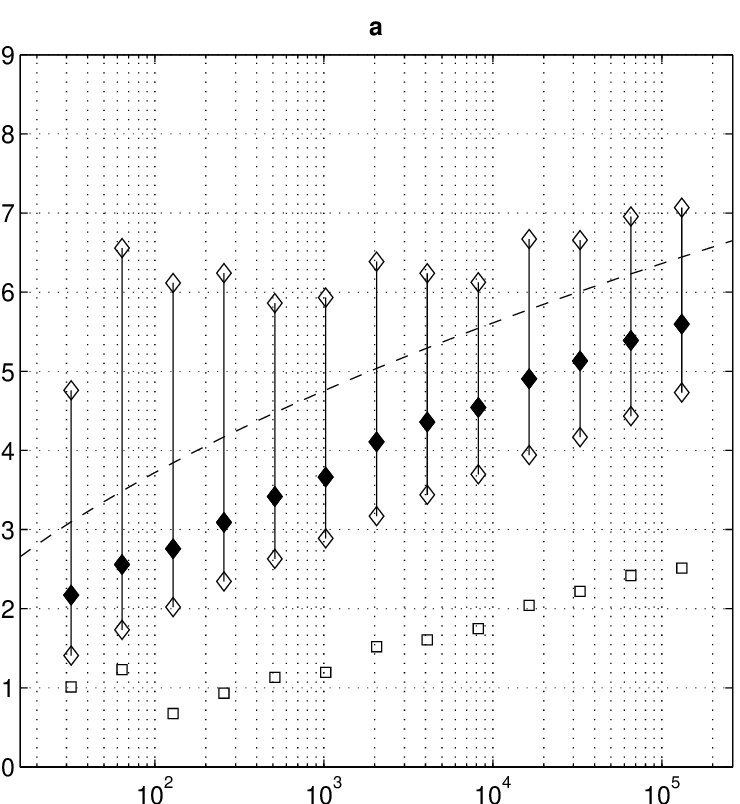}\includegraphics[width=3.8cm,height=3.8cm]{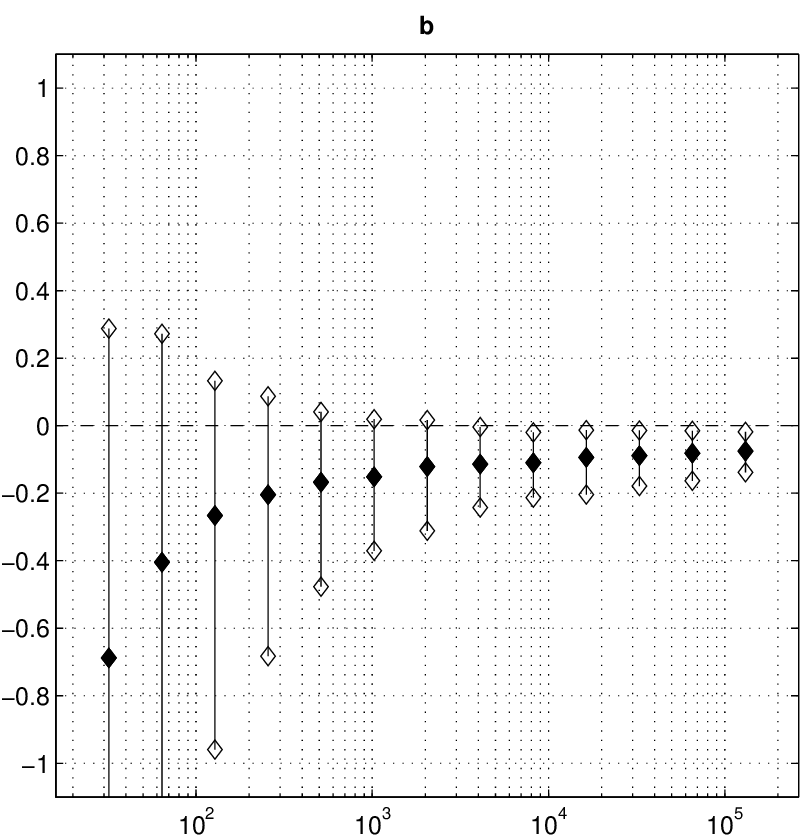}\includegraphics[width=3.8cm,height=3.8cm]{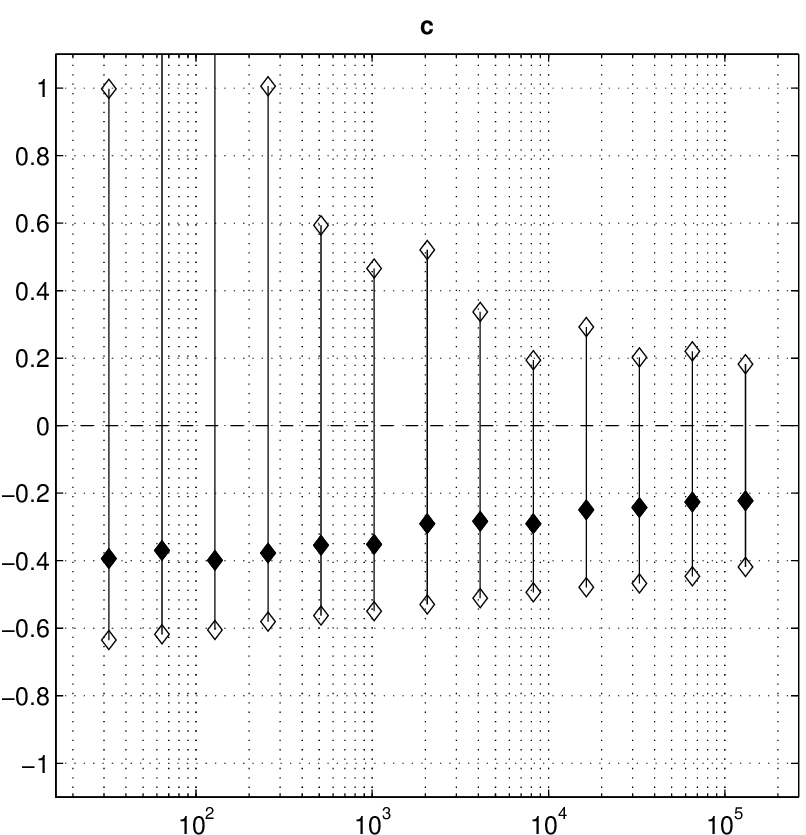}
\par\end{centering}

\centering{}\includegraphics[width=3.8cm,height=3.8cm]{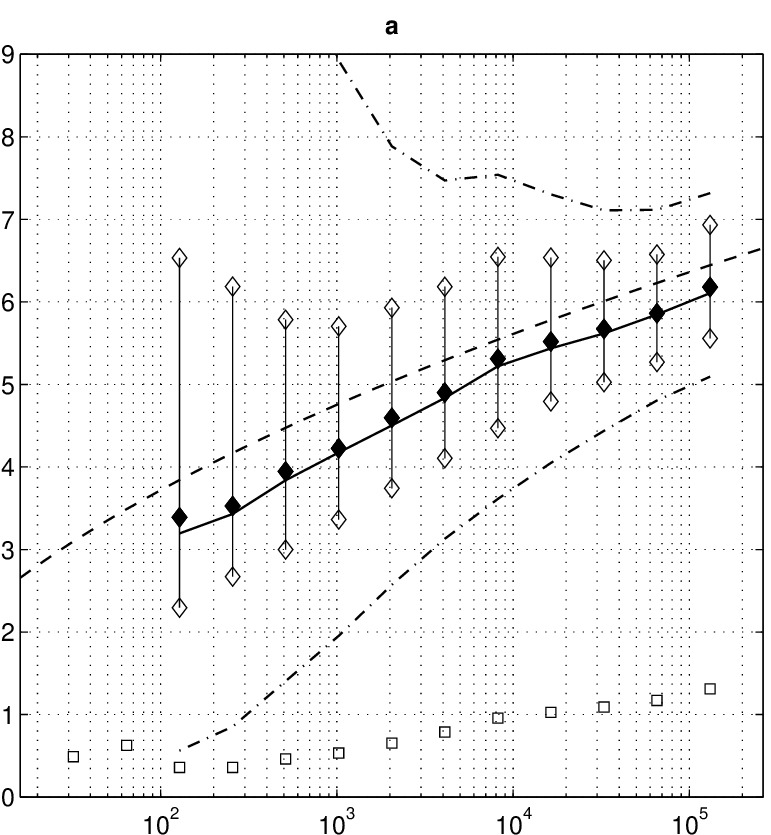}\includegraphics[width=3.8cm,height=3.8cm]{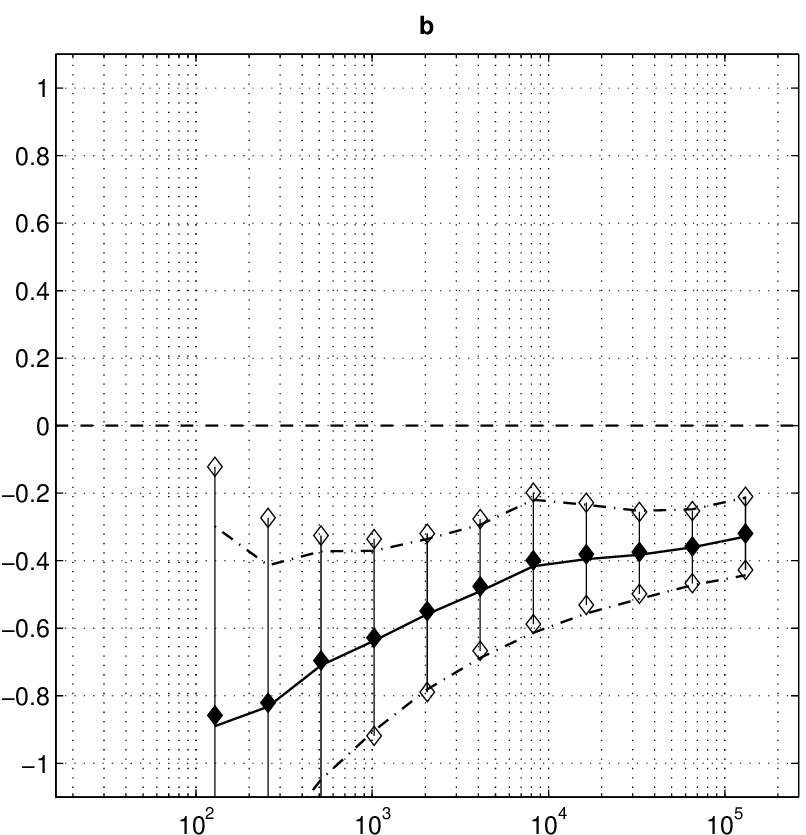}\includegraphics[width=3.8cm,height=3.8cm]{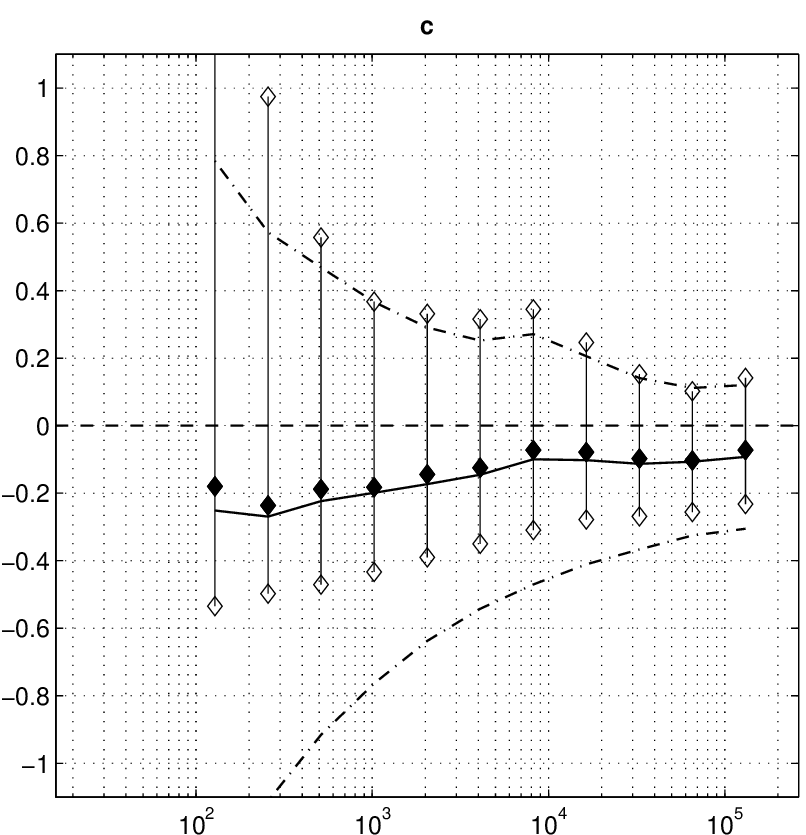}
\end{figure}
\begin{figure}
\protect\caption{As Figure \ref{Lognormal_many}, but for the Pareto-like distribution
($U(t)=t(1+2(\log t){}^{2})-1$) instead of the lognormal.}
\label{Frechetlike}

\begin{centering}
\includegraphics[width=3.8cm,height=3.8cm]{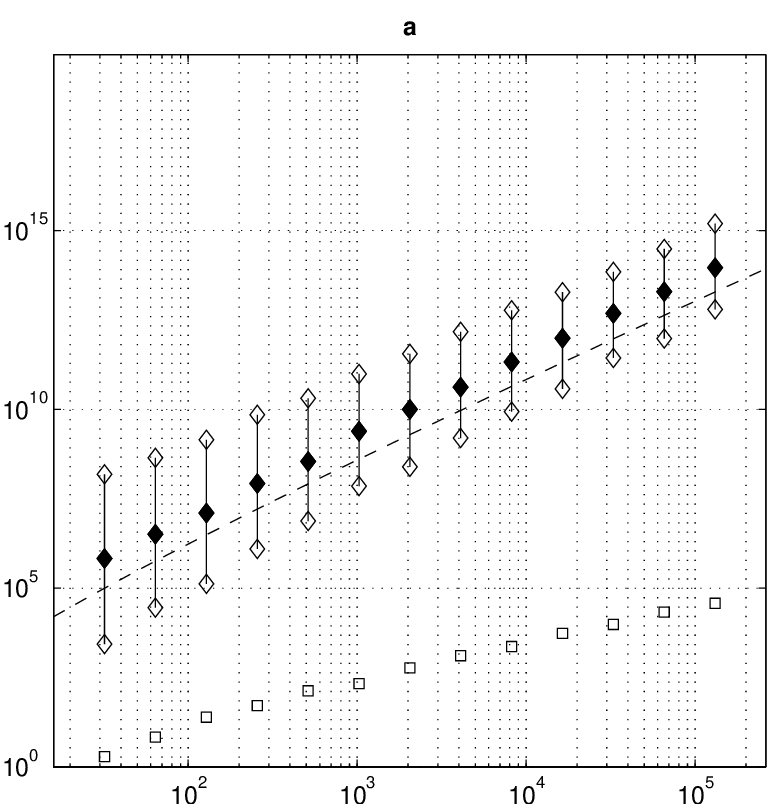}\includegraphics[width=3.8cm,height=3.8cm]{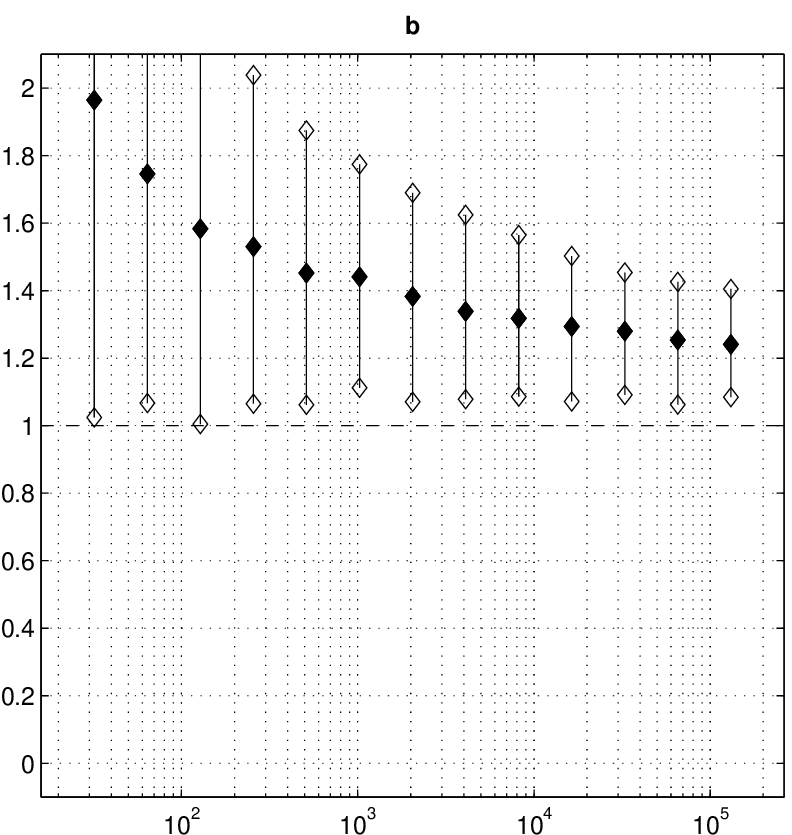}\includegraphics[width=3.8cm,height=3.8cm]{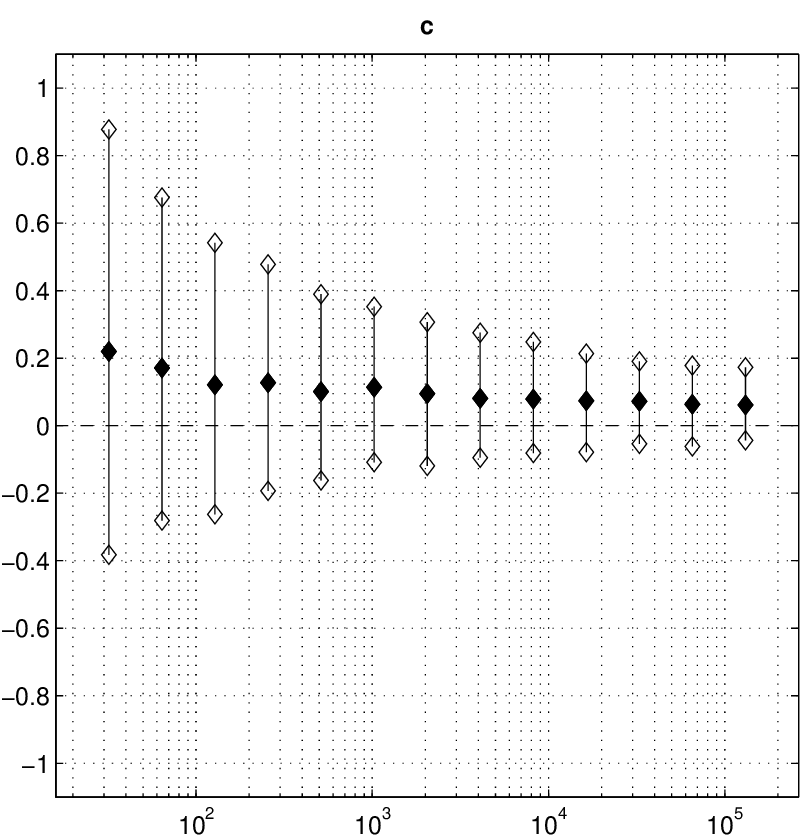}
\par\end{centering}

\centering{}\includegraphics[width=3.8cm,height=3.8cm]{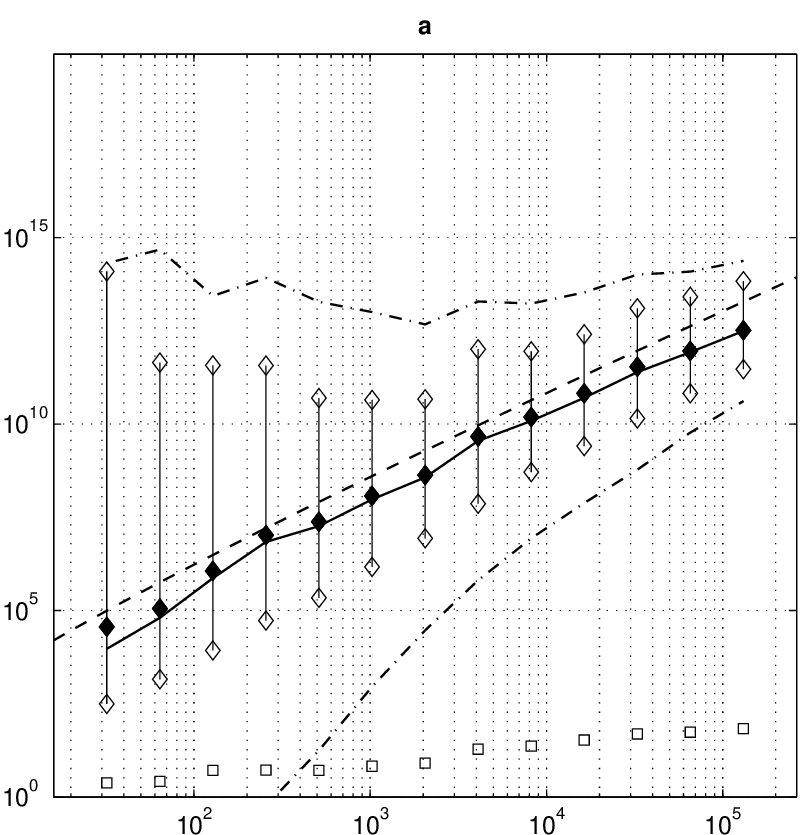}\includegraphics[width=3.8cm,height=3.8cm]{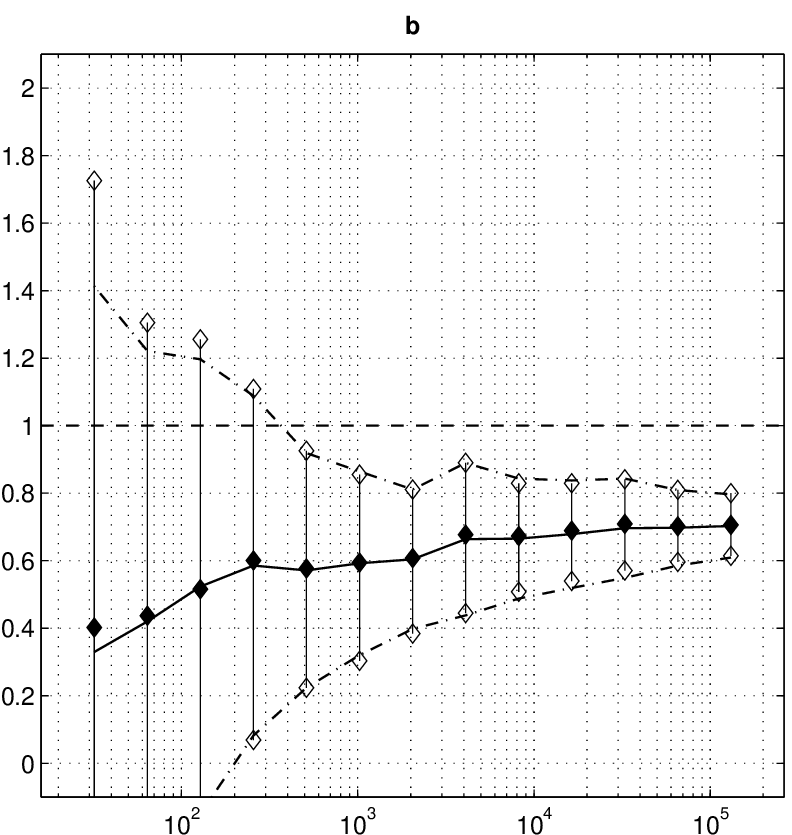}\includegraphics[width=3.8cm,height=3.8cm]{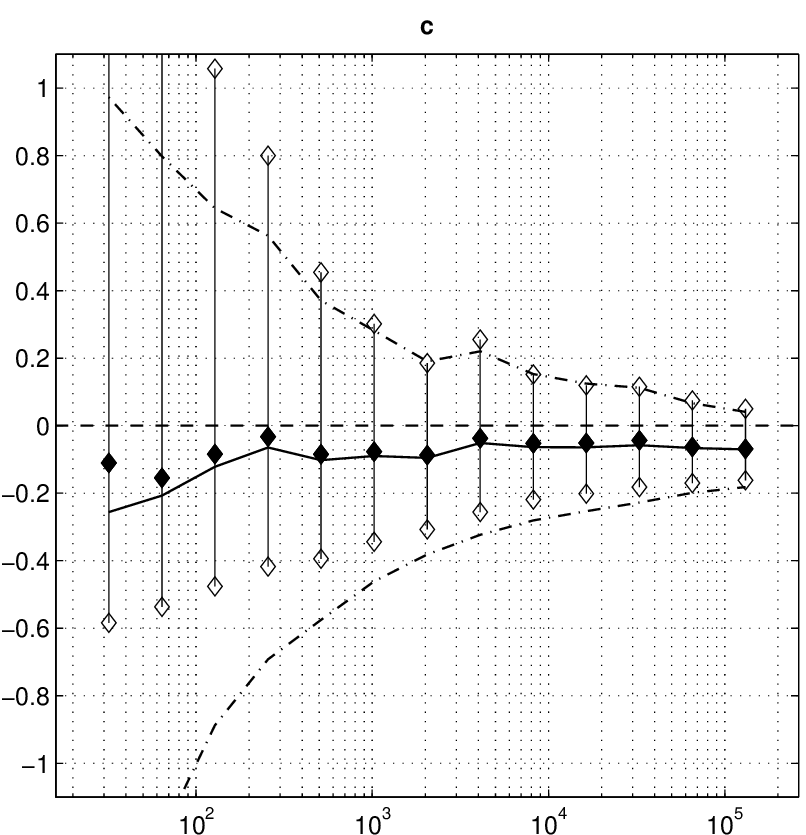}
\end{figure}
\begin{figure}
\protect\caption{As Figure \ref{Lognormal_many}, but for the Burr(1,$\frac{1}{4}$,4)
distribution (see main text) instead of the lognormal.}
\label{Burr}

\begin{centering}
\includegraphics[width=3.8cm,height=3.8cm]{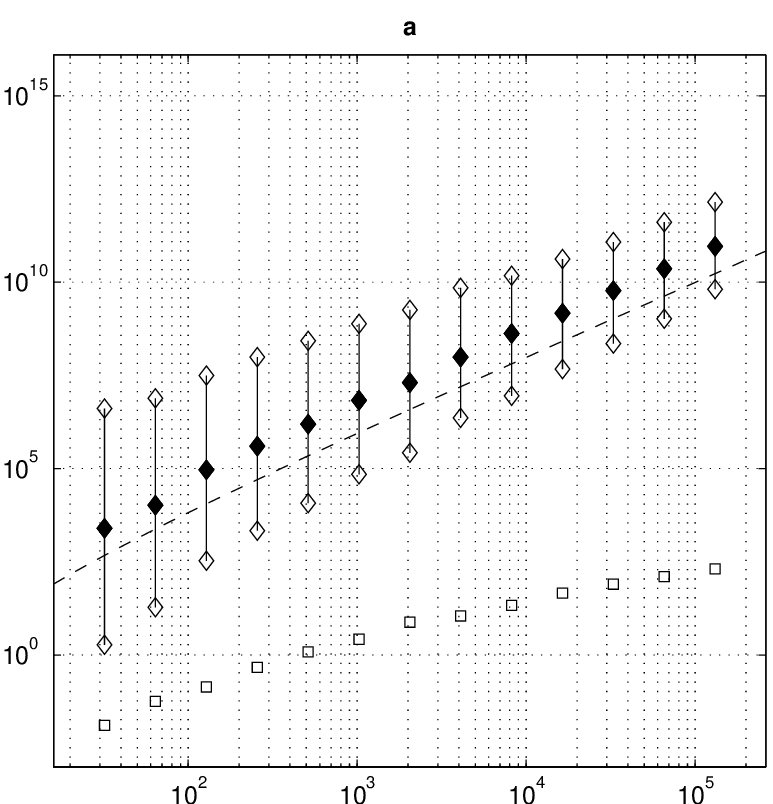}\includegraphics[width=3.8cm,height=3.8cm]{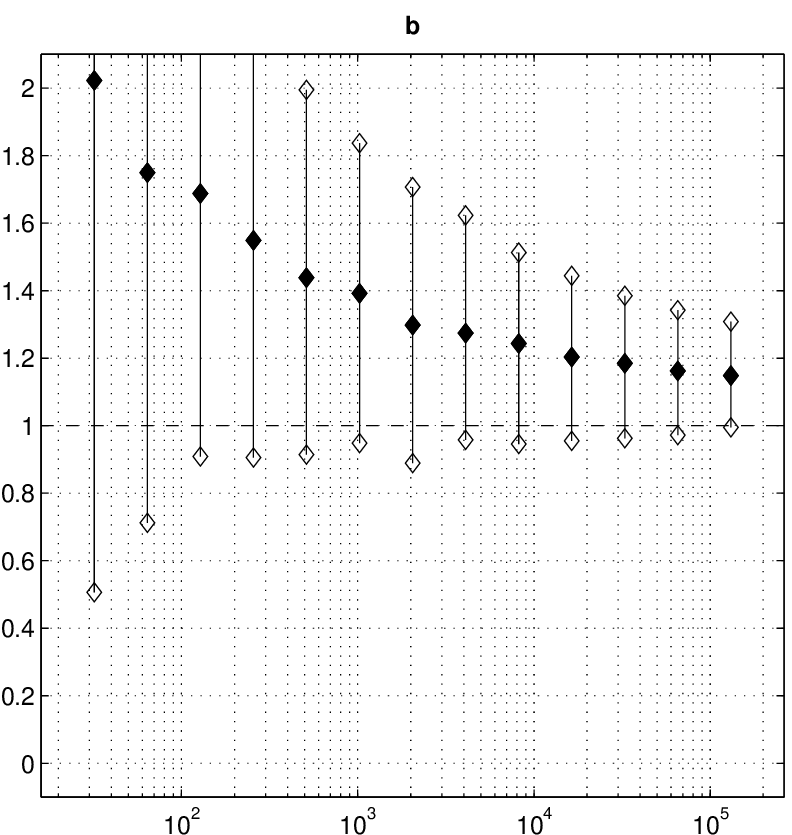}\includegraphics[width=3.8cm,height=3.8cm]{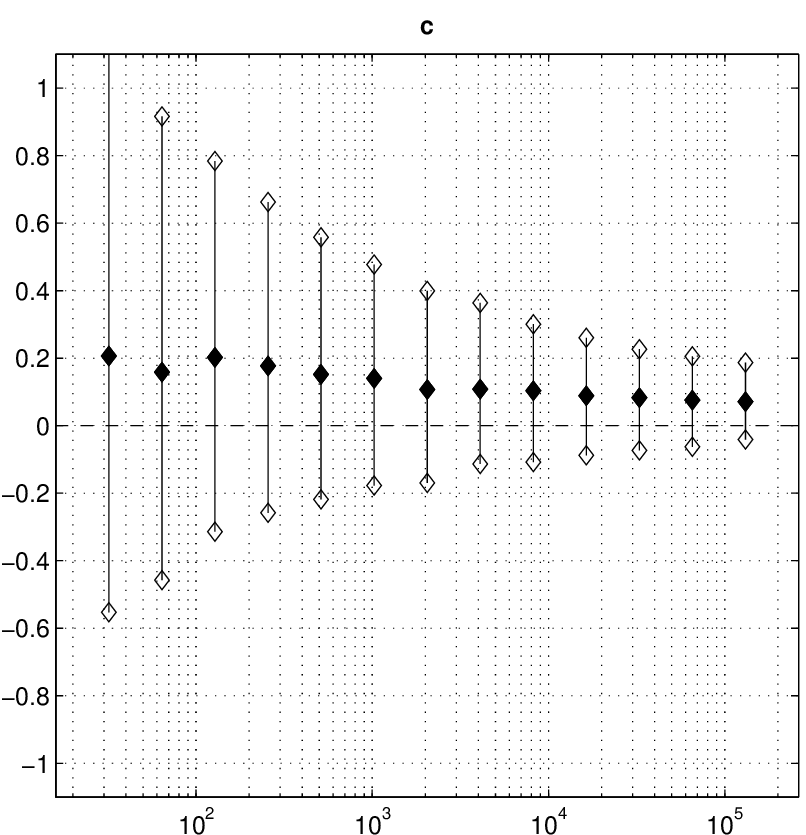}
\par\end{centering}

\centering{}\includegraphics[width=3.8cm,height=3.8cm]{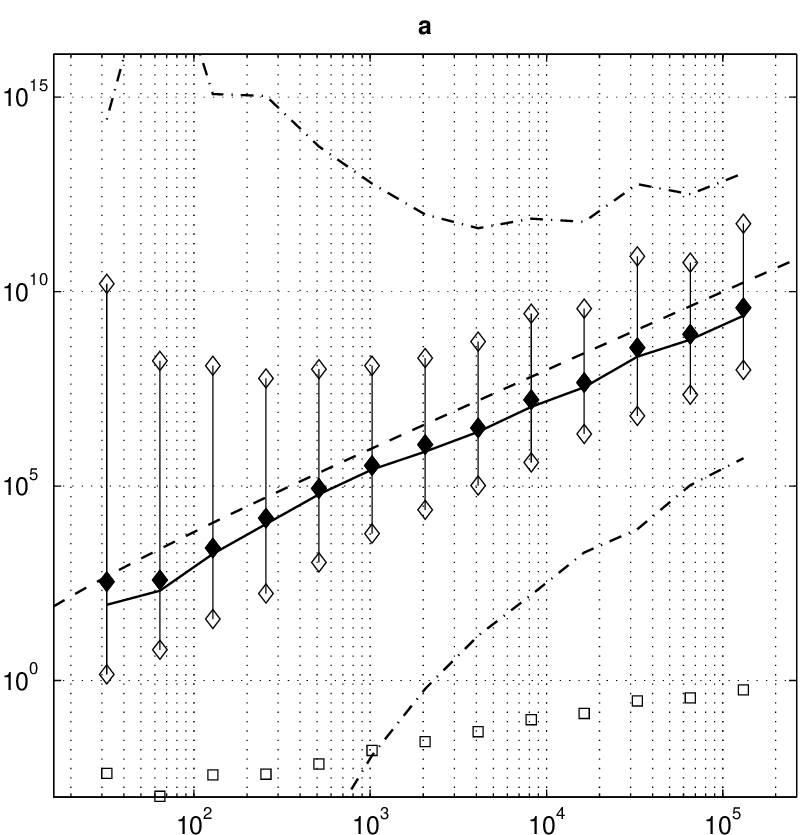}\includegraphics[width=3.8cm,height=3.8cm]{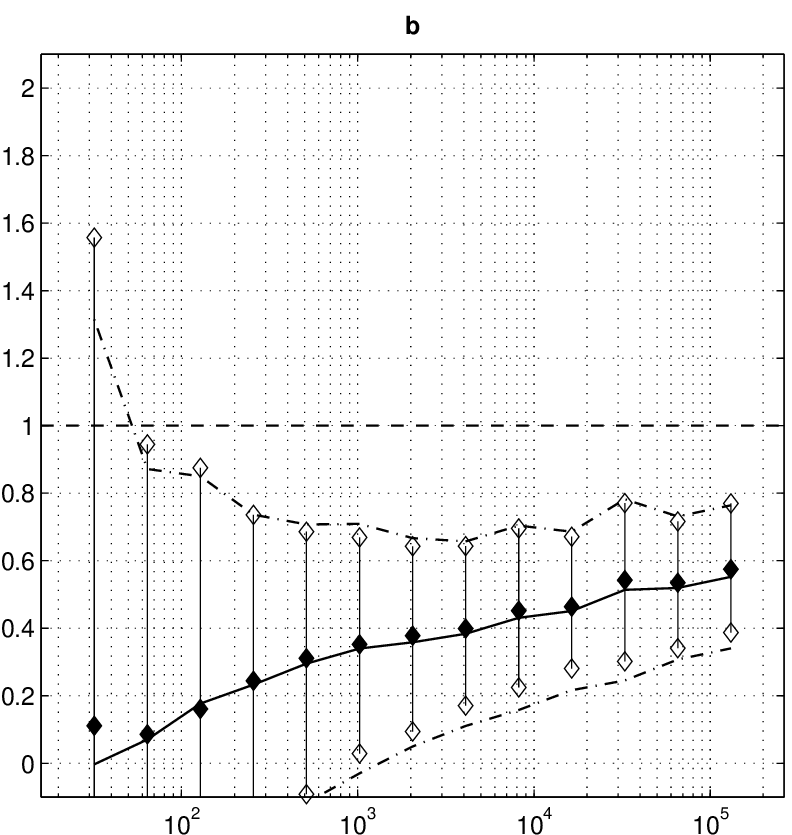}\includegraphics[width=3.8cm,height=3.8cm]{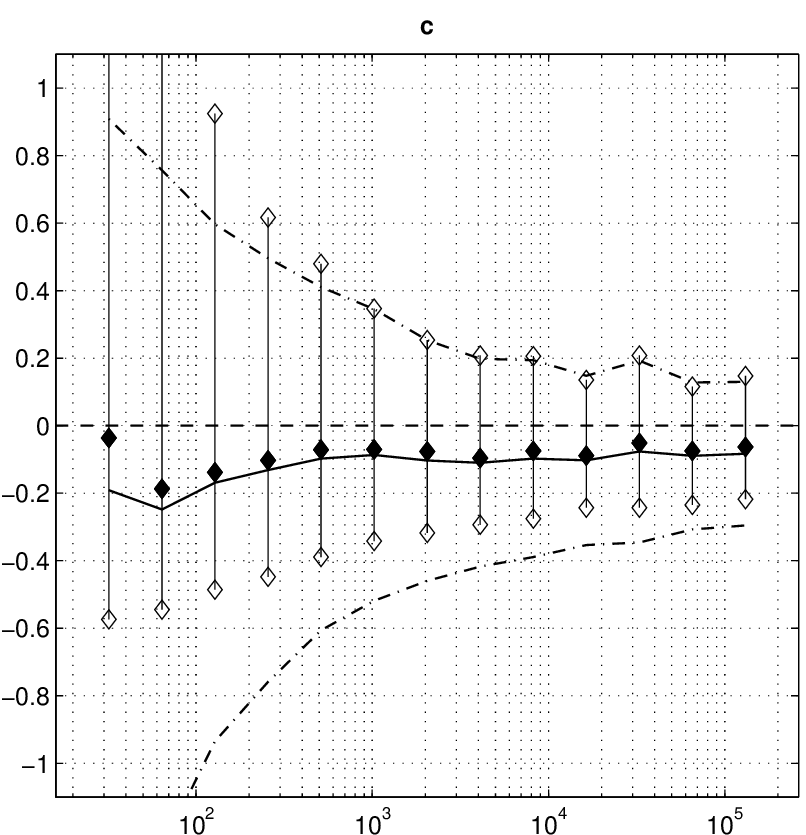}
\end{figure}

Figure \ref{Lognormal_many} shows the results for the lognormal distribution
with the GP-based estimator in the top row, and with the log-GW-based
estimator in the bottom row. The leftmost column (a) shows the medians
and the empirical 90\%-intervals (between the 5\% and 95\% percentiles)
of the quantile estimates; the width of an empirical 90\%-interval
will be referred to as ``spread''. The quantiles $U(n^{2})$ to
be estimated are indicated by a dashed curve. Approximate thresholds
$U(n/k(n))$ and $U(n/k_{0}(n))$ are indicated by open squares. The
middle column (b) shows the parameter estimates $\hat{\gamma}_{n}^{m}$
(top) and $\hat{\theta}_{n}$ (bottom), with the dashed lines indicating
the tail indices $\gamma$ and $\theta$ for the distribution function
considered. The rightmost column (c) displays the probability-based
errors $\hat{\nu}_{n}^{m}$ (top) and $\hat{\nu}_{n}$ (bottom). For
the log-GW-based estimator, also deterministic approximations $\tilde{q}_{y_{n}}(2\log n)$
with with $\tilde{\theta}=a_{\iota}$ and $\tilde{g}=g_{\iota}$ in
(\ref{eq:logGW_tailmodel}) and asymptotic 90\% intervals based on
(\ref{eq:rho_asnormal})-(\ref{eq:q_normal}) are displayed. The latter
are not confidence intervals, but are shown for comparison against
the empirical 5\% and 95\% percentiles and medians of the (biased)
estimates in order to verify how good the approximations provided
by (\ref{eq:rho_asnormal})-(\ref{eq:q_normal}) are. 

The top row of Figure \ref{Lognormal_many} shows the GP-based estimates
of $\log U(n^{2})$ apparently settling at a fixed distance upward
from the exact values, and no convergence of $\hat{\nu}_{n}^{m}$.
The parameter estimates $\hat{\gamma}_{n}^{m}$ appear to converge
slowly. In the bottom row, the log-GW-based estimator is seen to perform
well, with bias rapidly vanishing. Also, the spreads in $\hat{q}_{n}$
and $\hat{\nu}_{n}$ drop much more rapidly with increasing $n$ than
for $\hat{q}_{n}^{m}$ and $\hat{\nu}_{n}^{m}$. 

Figure \ref{Normal_many} for the normal distribution displays a similar
pattern as Figure \ref{Lognormal_many}, but with some differences.
The GP-based estimator now underestimates the very high quantiles,
even though the parameter estimator $\hat{\gamma}_{n}^{m}$ converges
rapidly. This is the only case in which the sample maximum as lower
bound to the quantile estimate became effective. The log-GW-based
quantile estimator is performing much better in this case, although
convergence is not as rapid as with lognormal data. Based on these
results alone, it is not clear whether the bias in $\hat{\nu}_{n}$
converges to zero; deterministic computations (not shown) for $n$
up to ${\scriptstyle 2^{60}}$ with prescribed $k_{2}(n)={\scriptstyle \left\lfloor n^{1/4}\right\rfloor }$
show that it vanishes slowly. For $\hat{q}_{n}(2\log n)-q(2\log n)$,
a small nonzero bias eventually remains, but the error relative to
$q(2\log n)$ vanishes, and therefore also the error relative to $q(2\log n)-q(y_{n})$. 

Since the favourable results of the log-GW-based estimator on lognormal
data would translate directly to equivalent results with an analogous
GW-based estimator on normal data, the latter would do better on the
normal data than the log-GW-based quantile estimator in Figure \ref{Normal_many}.
This indicates that in some cases, the speed of convergence may be
increased by replacing the latter by a GW-based estimator. 

The next two examples concern heavy-tailed distribution functions
with classical Pareto tail limits $U\in ERV_{(0,\infty)}$. By Proposition
\ref{pro:spec-lim}(a), $\log q\in ERV_{\{1\}}$. Figure \ref{Frechetlike}
shows results obtained for the distribution function satisfying $U(t)=t(1+2(\log t){}^{2})-1$.
Concerning bias, both estimators perform rather well as expected.
For small $n$, the log-GP-based estimator has a much smaller spread
than the log-GW estimator; for large $n$, the spreads are similar.
Given that the log-GW based estimator is based on only three order
statistics, a large small-sample spread is not surprising. Indeed,
replacing the moment estimator by Pickands' estimator for $\gamma$
(\citet{PickandsIII}) based on three order statistics, the spread
becomes larger than for the log-GW estimator (result not shown). 

Finally, Figure \ref{Burr} shows results for the Burr(1,$\frac{1}{4}$,4)
distribution with $U(t)={\scriptstyle (t^{1/4}-1)^{4}}$, which also
satisfies $U\in ERV_{\{1\}}$. Unlike the previous examples, $U$
has a negative second-order index (see \citet{Laurens  boek}) in
this case, so eventually, convergence toward the GP limit should be
rapid. As in the previous example, the GP-based estimator performs
rather well; the log-GW-based estimator performs similarly but with
somewhat larger spread (which is again smaller than obtained when
using Pickands' estimator for the GP-based estimation). 

In all figures, the threshold values $U(n/k_{0}(n))$ and $U(n/k(n))$
corresponding to the numbers $k_{0}(n)$ and $k(n)$ of upper order
statistics used by the estimators are rather different for the two
estimators. For the log-GW estimator with $\iota=2$, $k_{0}(n)$
must be at least $n^{3/4}$ irrespective of $k_{2}(n)$ (see (\ref{eq:k_j})),
so there is little room for adjustment in order to optimise performance.
However, with the GP estimator, $k(n)$ can be reduced considerably
to reduce bias if needed. For the lognormal and normal distributions
in Figures \ref{Lognormal_many} and \ref{Normal_many}, this is seen
to lead to a large spread.

The results tentatively confirm the expectations. For distribution
functions in the classical Pareto ($\gamma>0$) domain of attraction
(satisfying $\log q\in ERV_{\{1\}}$), log-GW en GP-based estimators
seem to perform similarly. However, in the classical domain of attraction
of the exponential ($\gamma=0$), log-GW may offer advantages. 

For the log-GW-based estimator, the asymptotic 90\%-intervals for
for $\hat{\theta}_{n}$ and $\hat{\nu}_{n}$ based on (\ref{eq:rho_asnormal})
and (\ref{eq:nu_normal}) provide good approximations to the empirical
90\%-intervals. For $\hat{q}_{n}$, the asymptotic 90\% intervals
based on (\ref{eq:q_normal}) are in some cases much too wide.

\section{\label{sec:Discussion}Discussion}

The log-GW tail limit $\log q\in ERV$ is a weak assumption of the
same nature as the classical regularity assumption $U\in ERV$ corresponding
to the GP tail limit, but specifically aimed toward approximation
and estimation of very high quantiles for probabilities in the range
(\ref{eq:p_n}). Proposition \ref{pro:spec-lim} indicates that if
a GP tail limit applies, then log-GW-based approximation may provide
benefits if $\gamma=0$. If $\gamma>0$, then approximation using
the GP tail should already be adequate. 

Further analysis confirms this: if $U\in ERV_{\{0\}}$ and $\log q\in ERV$,
then $\log q\in ERV_{(-\infty,1]}$ (see Theorem \ref{thm:GW_in_gamma=00003D0}(b)),
offering a continuum of tail shapes for approximation of quantiles
where the GP limit with $\gamma=0$ offers only one, the exponential
tail. 

Suppose that $U\in ERV_{\{0\}}$. Then any assumption ensuring convergence
of GP-based quantile approximations with $\gamma=0$ as in (\ref{eq:GP_conv})
implies $q\in RV_{\{1\}}$, so $\log q\in ERV_{\{0\}}(1)$ (see Proposition
\ref{pro:spec-lim}(b)); therefore, it excludes all other distribution
functions satisfying a log-GW tail limit and a GP tail limit with
$\gamma=0$, such as Weibull-like distributions (\emph{e.g.} the normal
distribution), distribution functions of exponents of Weibull-like
distributed random variables (\emph{e.g.} the lognormal distribution),
light tails with $q(\infty)$ still infinite such as $F=1-\exp(-\exp\textrm{I\textrm{d}})$,
distribution functions with finite $q(\infty)$ such as $F=1-\exp((q(\infty)-\textrm{Id})^{1/\theta}$)
with $\theta<0$, just to mention a few which correspond to log-GW
or GW limits or are close to such limits. 

As an example, consider the following seemingly innocent rate assumption
for (\ref{eq:U exc}): 
\begin{equation}
\lim_{t\rightarrow\infty}\biggl(\frac{U(t\lambda)-U(t)}{w(t)}-h_{\gamma}(\lambda)\biggr)\log t=0\quad\forall\lambda\geq1\label{eq:log_rate}
\end{equation}
with $\gamma=0$. It implies $q\in ERV_{\{1\}}$ (see Subsection \ref{sub:Proof-lograte}),
and thus by Lemma \ref{lem:f-logf}(a) in Subsection \ref{sub:Lemma},
$\log q\in ERV_{\{0\}}(1)$. Therefore, \emph{in the present context},
(\ref{eq:log_rate}) is actually quite restrictive. 

The Pareto domain of attraction with $\gamma>0$ is in the domain
of attraction of the log-GW tail limit $\log q\in ERV_{\{1\}}$ (Proposition
\ref{pro:spec-lim}(a)), so all results obtained for the latter also
apply to the former. Therefore, one might expect that if log-GW based
quantile approximation and estimation can not offer improvement if
$\gamma>0$, it may not do much harm either. 

This is tentatively confirmed by the results of the simulations in
Section \ref{sec:simulations}, which indicate that log-GW-based quantile
estimation may have merits within the $\gamma=0$ subdomain of attraction
of the GP limit and performs similarly to GP-based quantile estimation
in the $\gamma>0$ subdomain. However, it would be premature to draw
conclusions from only these few examples.

For the log-GW based estimator $\hat{q}_{n}$, the log-GW limit is
sufficient for consistency. To establish asymptotic normality, a relatively
high rate of convergence (\ref{eq:conv_rate}) to the log-GW limit
needed to be assumed. As shown in Section \ref{sec:estimators}, this
is a consequence of the particular formulation of this estimator.
Therefore, there is a need for alternative estimators which allow
the rate condition (\ref{eq:conv_rate}) to be relaxed. Based on the
simulation results, there appears to be a need for improved accuracy
with small sample sizes as well. It is suggested in Section \ref{sec:estimators}
that bias correction based on estimation of a higher-order ERV model
could be useful in log-GW-based estimators in order to obtain asymptotic
normality while avoiding slow decay of variability with increasing
$n$.

A limitation of log-GW approximation and estimation is that the notions
of convergence in (\ref{eq:conv_ql_rr_log}) and (\ref{eq:conv_logqlhat})
may be weak and cannot be replaced by (\ref{eq:conv_ql_rr}) and (\ref{eq:conv_qlhat})
for tails heavier than a typical Weibull tail unless additional assumptions
apply. The probability-based errors (\ref{eq:conv_nul}) and (\ref{eq:conv_nuhat})
are based on log-ratios of survival functions of a quantile and its
approximation or estimator. Although natural in view of the probability
range considered, a stronger notion of convergence,\emph{ e.g.} of
a ratio of survival functions, would be desirable for applications.
Stronger notions of convergence apply under the additional assumptions
for establishing asymptotic normality for the quantile estimator $\hat{q}_{n}$
in Corollary \ref{cor:normality}, notably the rate assumption (\ref{eq:conv_rate});
see Remark \ref{rem:strong_conv}. 

As a final remark, log-GW-based quantile approximation and estimation
for light tails with finite endpoints has only been marginally covered
here, so this case remains to be examined in more detail.

\section{Proofs and lemmas\label{sec:proofs}}

\subsection{\label{sub:Proof-of-spec-lim}Proof of Proposition \ref{pro:spec-lim}}

If $U\in ERV_{\{\gamma\}}$ for $\gamma>0$, then $U\in RV_{\{\gamma\}}$
so by the Potter bounds (\textit{e.g.} \citet{Bingham}, Theorem 1.5.6),
there is for every $\varepsilon\in(0,\gamma\wedge1)$ a $y_{\varepsilon}>0$
such that $y(\lambda-1)(\gamma-\varepsilon)-\varepsilon\leq\log q(y\lambda)-\log q(y)\leq y(\lambda-1)(\gamma+\varepsilon)+\varepsilon$
for all $y\geq y_{\varepsilon}$ and all $\lambda\geq1$. Therefore,
$\log q\in ERV_{\{1\}}(\textrm{Id}\cdot\gamma)$, so $\log q\in RV_{\{1\}}$.
Noting that $\gamma y\sim\log q(y)$ as $y\rightarrow\infty$ and
$\gamma U(t)/w(t)\rightarrow1$ as $t\rightarrow\infty$ (both due
to \citet{Laurens  boek}, Theorem B.2.2(1)), we obtain $\log\tilde{U}_{t}(t^{\lambda})=\log U(t)+\log(1+\frac{w(t)}{\gamma U(t)}(t^{(\lambda-1)\gamma}-1)))$
$=\log U(t)+(\lambda-1)\gamma\log t+o(1)\sim\lambda\log U(t)$ for
every $\lambda\geq1$, and since $\log q\in RV_{\{1\}}$, (\ref{eq:GP_conv_log})
follows, so (a) is proven.

If $\gamma=0$, (\ref{eq:GP_conv}) implies $\lim_{y\rightarrow\infty}(q(y\lambda)-q(y))/(w(\textrm{e}^{y})y)=\lambda-1\quad\forall\lambda>1$.
Therefore, $q\in ERV_{\{1\}}$ and by Lemma \ref{lem:f-logf}(a) in
Subsection \ref{sub:Lemma}, $\log q\in ERV_{\{0\}}(1)$, proving
(b).

\subsection{\label{sub:proof-GW_in_gamma=00003D0}Proof of Theorem \ref{thm:GW_in_gamma=00003D0}}

Suppose that $U\in ERV_{\{\gamma\}}$ with $\gamma>0$, then as in
Subsection \ref{sub:Proof-of-spec-lim}, there is for every $\varepsilon\in(0,\gamma\wedge1)$
a $y_{\varepsilon}>0$ such that $y(\lambda-1)(\gamma-\varepsilon)-\varepsilon\leq\log(q(y\lambda)/q(y))\leq y(\lambda-1)(\gamma+\varepsilon)+\varepsilon$
for all $y\geq y_{\varepsilon}$ and all $\lambda\geq1$, and therefore,
fixing $\iota>1$ and $\xi>\iota$, there is some $\varepsilon\in(0,\gamma(\xi-\iota)/(\xi+\iota-2)\wedge1)$,
$\delta>0$ and $z_{\varepsilon}\geq y_{\varepsilon}$ such that 
\begin{equation}
\frac{q(y\xi)-q(y)}{q(y\iota)-q(y)}\geq\frac{e^{y(\xi-1)(\gamma-\varepsilon)}(1-\varepsilon)-1}{e^{y(\iota-1)(\gamma+\varepsilon)}(1+\varepsilon)-1}\geq\exp(\delta y)\quad\forall y\geq z_{\varepsilon}.\label{eq:limit_to_inf}
\end{equation}
However, since $q\in ERV$, the left-hand side of (\ref{eq:limit_to_inf})
must tend to $h_{\theta}(\xi)/h_{\theta}(\iota)<\infty$ for some
real $\theta$ as $y\rightarrow\infty$, so $\gamma$ cannot exceed
$0$. Assuming that $\gamma<0$, a similar argument leads to a similar
contradiction, completing the proof of (a). 

For (b), if $U\in ERV$ then by Lemma \ref{lem:f-logf}(a) in Subsection
\ref{sub:Lemma}, $\log U\in ERV$ so since $\log q\in ERV$, (a)
implies that $\log U\in ERV_{\{0\}}$. Since $U\in ERV$ and $\log q\in ERV$,
Proposition \ref{pro:spec-lim}(a) implies that either $U\in ERV_{(0,\infty)}$
and $\log q\in ERV_{\{1\}}$, or $U\in ERV_{(-\infty,0]}$. In the
latter case, since $\log U\in ERV_{\{0\}}$, Lemma \ref{lem:f-logf}(c)
implies that $U\in ERV_{\{0\}}\subset RV_{\{0\}}$. Therefore, by
the Potter bounds, $\log q(y)=o(y)$ as $y\rightarrow\infty$, so
again by the Potter bounds, $\log q$ cannot be in $RV_{(1,\infty)}=ERV_{(1,\infty)}$.

\subsection{\label{sub:proof-procentral}Proof of Proposition \ref{pro:central}}

Because $\tilde{\theta}(y)\rightarrow\theta$ and $\tilde{g}(y)\sim g(y)$
as $y\rightarrow\infty$, noting that by (\ref{eq:B}), $h_{\theta+o(1)}(\lambda)=\lambda^{o(1)}h_{\theta}(\lambda)$,
we obtain using the mean value theorem, 
\begin{equation}
\log\tilde{q}_{y}(y\lambda)=\log q(y)+g(y)h_{\theta}(\lambda)(1+o(1))\label{eq:q_tilde_expand}
\end{equation}
locally uniformly in $\lambda>0$. Since (\ref{eq:GW_explic}) also
holds locally uniformly in $\lambda>0$ (see \citet{Bingham}, Theorem
3.1.16), (\ref{eq:conv_ql_rr_log}) follows from (\ref{eq:q_tilde_expand}).
If in addition, (\ref{eq:g_evt_bnd}) holds, then by (\ref{eq:conv_ql_rr_log}),
$\log\tilde{q}_{y}(y\lambda)-\log q(y\lambda)=(\tilde{q}_{y}(y\lambda)/q(y\lambda)-1)(1+o(1))$
as $y\rightarrow\infty$ locally uniformly in $\lambda>0$, and (\ref{eq:conv_ql_rr})
follows. If $q\in ERV$, then (\ref{eq:g_evt_bnd}) follows from Lemma
\ref{lem:f-logf}(b) in Subsection \ref{sub:Lemma}.

\subsection{\label{sub:notes_Remark}Clarification of Remark \ref{rem:R1} }

Under condition (\ref{eq:g_evt_bnd}), $q(y)g(y)$ in (\ref{eq:conv_ql_rr})
can be replaced by $q(y\xi)-q(y)$ for any $\xi\in(0,\infty)\setminus\{1\}$:
for $\xi>1$, this follows from (\ref{eq:g_bound}), as $q(y\xi)/q(y)-1>\log q(y\xi)-\log q(y)$;
for $\xi\in(0,1)$, we find $\bigl|\frac{g(y)q(y)}{q(y\xi)-q(y)}\bigr|=\frac{g(y\xi)}{q(y)/q(y\xi)-1}O(1)=\frac{g(y\xi)}{\log q(y)-\log q(y\xi)}O(1)=O(1)$
as $y\rightarrow\infty$ by regular variation of $g$ and (\ref{eq:g_bound}).

If $q(\infty)<\infty$, then $q(y)g(y)$ in (\ref{eq:conv_ql_rr})
may be replaced by $q(\infty)-q(y\eta)$ for any $\eta>0$: taking
$\xi>1$, $\frac{q(y)g(y)}{q(\infty)-q(y\eta)}\leq\frac{q(\infty)g(y)}{q(y\eta\xi)-q(y\eta)}\sim\frac{g(y\eta)}{\log q(y\eta\xi)-\log q(y\eta)}\frac{g(y)}{g(y\eta)}=O(1)$
as $y\rightarrow\infty$ by (\ref{eq:g_bound}) and regular variation
of $g$.

\subsection{\label{sub:ProofProPHI}Proof of Theorem \ref{thm: PHI-omega-nu}}

From Proposition \ref{pro:central} and (\ref{eq:logq_in_PI_explic}),
as $y\rightarrow\infty$,
\begin{equation}
\log\tilde{q}_{y}(y\lambda)=\log q(y)+g(y)(h_{\theta}(\lambda)+o(1))\label{eq:O1}
\end{equation}
locally uniformly in $\lambda>0$. Let $\varLambda>1$ and $b\in(0,\varLambda^{-\bigl|\theta\bigr|}/\bigl|\theta\bigr|)$.
Applying the mean value theorem to $x\mapsto h_{\theta}^{-1}(h_{\theta}(\lambda)+x)=(\lambda^{\theta}+x\theta)^{1/\theta}$,
we find that for some $M>0$,
\[
\bigl|h_{\theta}^{-1}(h_{\theta}(\lambda)+x)-\lambda\bigr|\leq M\bigl|x\bigr|\qquad\forall\lambda\in[\varLambda^{-1},\varLambda],\: x\in[-b,b].
\]

Therefore, by (\ref{eq:O1}), $\log\tilde{q}_{y}(y\lambda)=\log q(y)+g(y)h_{\theta}(\lambda+o(1))$
uniformly in $\lambda\in[\varLambda^{-1},\varLambda]$, so using (\ref{eq:log-GW_surv_logform}),
\[
q^{-1}(\tilde{q}_{y}(y\lambda))=-\log\bigl(1-F\bigl(q(y)e^{g(y)h_{\theta}(\lambda+o(1))}\bigr)\bigr)=y(\lambda+o(1))
\]
uniformly in $\lambda\in[\varLambda^{-1},\varLambda]$. As $\lim_{z\rightarrow\infty}z^{-1}q^{-1}(q(z))=1$
by (\ref{eq:log-GW_surv_logform}), we obtain (\ref{eq:conv_nul}).

\subsection{\label{sub:Proof-of-Pickands-like}Proof of Theorem \ref{thm:Pickands-like_random}}

Define $\hat{\iota}_{m}(n)$ for all $n\geq1$ and $m\in\{0,1,2\}$
by
\begin{equation}
\hat{\iota}_{m}(n):=y_{n}^{-1}q^{-1}(X_{n-k_{m}(n)+1,n})=-y_{n}^{-1}\log(1-F(X_{n-k_{m}(n)+1,n})).\label{eq:iota_def}
\end{equation}

To simplify notation, we will use
\begin{equation}
s:=\log q,\quad\hat{s}_{n}:=\log\hat{q}_{n},\quad\tilde{s}_{y}:=\log\tilde{q}_{y}.\label{eq:s_defs}
\end{equation}

Since $q(\infty)>1$, almost surely some $n_{0}\in\mathbb{N}$ exists
such that for all $n\geq n_{0}$, $X_{n-k_{0}(n)+1,n}>0$ and $\hat{q}_{n}(z)$
is defined. By Lemma \ref{lem:random_argument_converges} in Subsection
\ref{sub:Lemma}, (\ref{eq:X=00003Dq_yiota}) and (\ref{eq:iota_converges2})
hold for $\hat{\iota}_{0}(n)$, $\hat{\iota}_{1}(n)$ and $\hat{\iota}_{2}(n)$
defined by (\ref{eq:iota_def}). Therefore, by (\ref{eq:def_ahat})
and (\ref{eq:X=00003Dq_yiota}), using (\ref{eq:s_defs}),
\[
\hat{\theta}_{n}=\frac{1}{\log\iota}\log\frac{s(y_{n}\hat{\iota}_{2}(n))-s(y_{n}\hat{\iota}_{1}(n))}{s(y_{n}\hat{\iota}_{1}(n))-s(y_{n}\hat{\iota}_{0}(n))}\quad\forall n\geq n_{0}\quad\textrm{a.s}.
\]
and as $s\in ERV_{\{\theta\}}(g)$ and therefore $g\in RV_{\{\theta\}}$,
by locally uniform convergence (see \citet{Bingham}, Theorems 3.1.16
and 1.5.2), and (\ref{eq:iota_converges2}), almost surely 
\begin{equation}
\hat{\theta}_{n}=\frac{1}{\log\iota}\left(\log\frac{h_{\theta}(\hat{\iota}_{2}(n)/\hat{\iota}_{1}(n))+o(1)}{h_{\theta}(\hat{\iota}_{1}(n)/\hat{\iota}_{0}(n))+o(1)}+\log\frac{g(y_{n}\hat{\iota}_{1}(n))}{g(y_{n}\hat{\iota}_{0}(n))}\right)\rightarrow\theta.\label{eq:rho_conv}
\end{equation}

Similarly, using (\ref{eq:rho_conv}), almost surely
\[
\frac{\hat{g}_{n}}{g(y_{n})}=\frac{s(y_{n}\hat{\iota}_{1}(n))-s(y_{n}\hat{\iota}_{0}(n)\eta)}{g(y_{n})h_{\hat{\theta}_{n}}(\iota)}=\frac{h_{\theta}(\hat{\iota}_{1}(n)/\hat{\iota}_{0}(n))+o(1)}{h_{\hat{\theta}_{n}}(\iota)}\left(\frac{g(y_{n}\hat{\iota}_{0}(n))}{g(y_{n})}\right)\rightarrow1,
\]
so (\ref{eq:conv_ahat}) is proven. Furthermore, in a similar manner,
almost surely
\begin{equation}
\frac{s(y_{n})-\log X_{n-k_{0}(n)+1,n}}{g(y_{n})}=\frac{s(y_{n})-s(y_{n}\hat{\iota}_{0}(n))}{g(y_{n})}=-h_{\theta}(\hat{\iota}_{0}(n))+o(1)\rightarrow0.\label{eq:s_diff}
\end{equation}

By (\ref{eq:conv_ahat}), almost surely $(\hat{g}_{n}/g(y_{n}))h_{\hat{\theta}_{n}}(\lambda)\rightarrow h_{\theta}(\lambda)$
locally uniformly in $\lambda>0$, so using (\ref{eq:s_diff}), $\hat{s}_{n}$
defined by (\ref{eq:s_defs}) and (\ref{eq:def_qhat}) satisfies
\begin{equation}
\sup_{\lambda\in[\Lambda^{-1},\Lambda]}\bigl|\hat{s}_{n}(y_{n}\lambda)-s(y_{n}\lambda)\bigr|/g(y_{n})\rightarrow0\qquad a.s.\qquad\forall\varLambda>1.\label{eq:s_hat_conv}
\end{equation}
 Using (\ref{eq:yn_over_logn}), we subsequently obtain (\ref{eq:conv_logqlhat}),
and (\ref{eq:conv_qlhat}) follows readily as in the proof of Proposition
\ref{pro:central}. From (\ref{eq:s_hat_conv}) and (\ref{eq:logq_in_PI_explic}),
almost surely
\begin{equation}
\hat{s}_{n}(y_{n}\lambda)=s(y_{n}\lambda)+o(g(y_{n}))=s(y_{n})+g(y_{n})(h_{\theta}(\lambda)+o(1))\label{eq:shat_appr}
\end{equation}
locally uniformly in $\lambda>0$. By mimicking the proof of Theorem
\ref{thm: PHI-omega-nu} in Subsection \ref{sub:ProofProPHI} with
$y_{n}$ replacing $y$ and $\hat{s}_{n}(y_{n}\lambda)$ replacing
$\tilde{s}_{y}(y\lambda)$, we obtain that for every $\varLambda>1$
almost surely, $\sup_{\lambda\in[\Lambda^{-1},\Lambda]}\bigl|\hat{\nu}_{n}(y_{n}\lambda)\bigr|\rightarrow0$;
using (\ref{eq:yn_over_logn}), (\ref{eq:conv_nuhat}) follows.

\subsection{\label{sub:Proof-of-Theorem-GWnormal}Proofs of Theorem \ref{thm:logGW_normal}
and Corollary \ref{cor:normality}}

Using the definitions (\ref{eq:s_defs}), by (\ref{eq:q_diff_cond}),
$s'\in RV_{\{\theta-1\}}$, so $s$ is a homeomorphism on some neighbourhood
of $\infty$. Therefore, without loss of generality, we can take $s$
increasing and continuous, so $\log X_{n-k_{m}(n)+1,n}=s(y_{n}\hat{\iota}_{m}(n))$
for all $n$ and $m\in\{0,1,2\}$. Furthermore, by integration, $s'\in RV_{\{\theta-1\}}$
implies
\begin{equation}
s(y\lambda)-s(y)=s'(y)yh_{\theta}(\lambda)(1+o(1))\label{eq:multipl_error}
\end{equation}
with $o(1)$ vanishing locally uniformly for $\lambda>0$ as $y\rightarrow\infty$%
\footnote{This implies $\log U\in ERV_{\{0\}}$, supplementing Theorem \ref{thm:GW_in_gamma=00003D0}. %
}. Therefore, with $\hat{\iota}$ as in (\ref{eq:iota_def}) and
\[
\mathcal{R}_{n}^{m}:=\frac{\log X_{n-k_{m}(n)+1,n}-s(y_{n}\iota^{m})}{s'(y_{n}\iota^{m})y_{n}\iota^{m}},
\]
using (\ref{eq:iota_converges2}) from Lemma \ref{lem:random_argument_converges}
in Subsection \ref{sub:Lemma}, almost surely
\begin{equation}
\mathcal{R}_{n}^{m}=\frac{s(y_{n}\hat{\iota}_{m}(n))-s(y_{n}\iota^{m})}{s'(y_{n}\iota^{m})y_{n}\iota^{m}}\sim h_{\theta}(\iota^{-m}\hat{\iota}_{m}(n))\sim\iota^{-m}\hat{\iota}_{m}(n)-1\label{eq:q_random_conv}
\end{equation}
for $m\in\{0,1,2\}$. Similarly, for $\tilde{\nu}$ defined by (\ref{eq:nu_def})
with $\tilde{\theta}=a_{\iota}$ and $\tilde{g}=g_{\iota}$ in (\ref{eq:logGW_tailmodel}),
substituting $y_{n}\lambda(1+\tilde{\nu}_{y_{n}}(y_{n}\lambda))$
for $y$ in (\ref{eq:multipl_error}) and using $s'\in RV_{\{\theta-1\}}$
and Theorems \ref{thm: PHI-omega-nu} and \ref{thm:Pickands-like_random},
almost surely,
\[
\frac{s(y_{n}\lambda(1+\hat{\nu}_{n}(y_{n}\lambda)))-s(y_{n}\lambda(1+\tilde{\nu}_{y_{n}}(y_{n}\lambda)))}{s'(y_{n})y_{n}}\sim\lambda{}^{\theta}h_{\theta}\left(\frac{1+\hat{\nu}_{n}(y_{n}\lambda)}{1+\tilde{\nu}_{y_{n}}(y_{n}\lambda)}\right)
\]
\begin{equation}
\sim\lambda^{\theta}(\hat{\nu}_{n}(y_{n}\lambda)-\tilde{\nu}_{y_{n}}(y_{n}\lambda))\label{eq:q_nu_random_conv}
\end{equation}
locally uniformly for $\lambda>0$. From (\ref{eq:al_def}) and (\ref{eq:def_ahat}),
using (\ref{eq:q_random_conv}), (\ref{eq:multipl_error}), $s'\in RV_{\{\theta-1\}}$
and (\ref{eq:iota_converges2}),
\[
(\hat{\theta}_{n}-a_{\iota}(y_{n}))\log\iota=\log\left(1+\frac{\mathcal{R}_{n}^{2}\frac{s'(y_{n}\iota^{2})\iota}{s'(y_{n}\iota)}-\mathcal{R}_{n}^{1}}{\frac{s(y_{n}\iota^{2})-s(y_{n}\iota)}{y_{n}\iota s'(y_{n}\iota)}}\right)-\log\left(1+\frac{\mathcal{R}_{n}^{1}\frac{s'(y_{n}\iota)\iota}{s'(y_{n})}-\mathcal{R}_{n}^{0}}{\frac{s(y_{n}\iota)-s(y_{n})}{y_{n}s'(y_{n})}}\right)
\]
\[
=\left(h_{\theta}(\iota)\right)^{-1}\left(\iota^{\theta}(\iota^{-2}\hat{\iota}_{2}(n)-1)(1+o(1))-(\iota^{-1}\hat{\iota}_{1}(n)-1)(1+o(1))\right.
\]
\begin{equation}
\left.-\iota^{\theta}(\iota^{-1}\hat{\iota}_{1}(n)-1)(1+o(1))+(\hat{\iota}_{0}(n)-1)(1+o(1))\right)\quad\textrm{a.s.}\label{eq:rho_randomerror}
\end{equation}

Because $1-F(X)$ has the uniform distribution on $(0,1)$, by \citet{Smirnov},
\begin{equation}
y_{n}(\hat{\iota}_{m}(n)-\iota^{m})\sqrt{k_{m}(n)}\overset{d}{\rightarrow}N(0,1)\quad\forall m\in\{0,1,2\}\label{eq:smirnov}
\end{equation}
as $n\rightarrow\infty$. Therefore, as $k_{2}(n)=o(k_{1}(n))$ and
$k_{1}(n)=o(k_{0}(n))$, (\ref{eq:rho_randomerror}) implies (\ref{eq:rho_asnormal}).
From (\ref{eq:def_qhat}) and (\ref{eq:logGW_tailmodel}),
\begin{equation}
\frac{\hat{s}_{n}(y_{n}\lambda)-\tilde{s}_{y_{n}}(y_{n}\lambda)}{y_{n}s'(y_{n})}=\mathcal{R}_{n}^{0}+\frac{h_{a_{\iota}(y_{n})}(\lambda)}{h_{a_{\iota}(y_{n})}(\iota)}\left(\mathcal{R}_{n}^{1}\frac{s'(y_{n}\iota)\iota}{s'(y_{n})}-\mathcal{R}_{n}^{0}\right)\label{eq:qhat-qtilde}
\end{equation}
\[
+\left(\frac{h_{\hat{\theta}_{n}}(\lambda)}{h_{\hat{\theta}_{n}}(\iota)}-\frac{h_{a_{\iota}(y_{n})}(\lambda)}{h_{a_{\iota}(y_{n})}(\iota)}\right)\left(\frac{s(y_{n}\iota)-s(y_{n})}{y_{n}s'(y_{n})}+\mathcal{R}_{n}^{1}\frac{s'(y_{n}\iota)\iota}{s'(y_{n})}-\mathcal{R}_{n}^{0}\right).
\]

As $s$ is increasing and $\sqrt{k_{2}(n)/k_{m}(n)}\log\log n\rightarrow0$
for $m\in\{0,1\}$, Lemma \ref{lem:random_argument_converges}(b)
in Subsection \ref{sub:Lemma} implies that
\begin{equation}
(\hat{\iota}_{m}(n)-\iota^{m})y_{n}\sqrt{k_{2}(n)}\rightarrow0\quad m\in\{0,1\}\quad a.s.\label{eq:ihat-i-van}
\end{equation}
and $(\hat{\iota}_{2}(n)-\iota^{2})y_{n}\sqrt{k_{2}(n)}=o(\log\log n)$
a.s., so by (\ref{eq:rho_randomerror}), $(\hat{\theta}_{n}-a_{\iota}(y_{n}))y_{n}\sqrt{k_{2}(n)}=o(\log\log n)$
a.s. Therefore, by Taylor's theorem (see (\ref{eq:kappa_def})),
\begin{equation}
\left|\frac{h_{\hat{\theta}_{n}}(\lambda)}{h_{\hat{\theta}_{n}}(\iota)}-\frac{h_{a_{\iota}(y_{n})}(\lambda)}{h_{a_{\iota}(y_{n})}(\iota)}-\kappa_{\theta}(\lambda,\iota)(\hat{\theta}_{n}-a_{\iota}(y_{n}))\right|=O(1)(\hat{\theta}_{n}-a_{\iota}(y_{n}))^{2}\quad a.s.\label{eq:h_over_h}
\end{equation}
locally uniformly in $\lambda>0$, with on the right-hand side (using
(\ref{eq:liminf_y})):
\begin{equation}
(\hat{\theta}_{n}-a_{\iota}(y_{n}))^{2}=o(1)(\log\log n)^{2}/(y_{n}^{2}k_{2}(n))=o(1)/(y_{n}k_{2}(n))\quad a.s.\label{eq:thetahat_err2}
\end{equation}

By (\ref{eq:q_random_conv}) and (\ref{eq:ihat-i-van}), $\mathcal{R}_{n}^{m}y_{n}\sqrt{k_{2}(n)}\rightarrow0$
a.s. for $m\in\{0,1\}$. Therefore, from (\ref{eq:qhat-qtilde}),
using (\ref{eq:h_over_h}), (\ref{eq:thetahat_err2}), (\ref{eq:multipl_error})
and $s'\in RV_{\{\theta-1\}}$, for all $\Lambda>1$,
\begin{equation}
\sup_{\lambda\in[\Lambda^{-1},\Lambda]}\left|\frac{\hat{s}_{n}(y_{n}\lambda)-\tilde{s}_{y_{n}}(y_{n}\lambda)}{y_{n}s'(y_{n})}-\kappa_{\theta}(\lambda,\iota)(\hat{\theta}_{n}-a_{\iota}(y_{n}))h_{\theta}(\iota)\right|y_{n}\sqrt{k_{2}(n)}\rightarrow0\quad a.s.\label{eq:s-s_normal}
\end{equation}

Therefore, by (\ref{eq:rho_asnormal}) and (\ref{eq:yn_over_logn}),
(\ref{eq:q_normal}) is obtained. Because $s$ is continuously increasing,
$s(y_{n}\lambda(1+\tilde{\nu}_{y_{n}}(y_{n}\lambda)))=\tilde{s}_{y_{n}}(y_{n}\lambda)$
and $s(y_{n}\lambda(1+\hat{\nu}_{n}(y_{n}\lambda)))=\hat{s}_{n}(y_{n}\lambda)$
in (\ref{eq:q_nu_random_conv}) so almost surely,
\[
\hat{\nu}_{n}(y_{n}\lambda)-\tilde{\nu}_{y_{n}}(y_{n}\lambda)=(1+o(1))\lambda^{-\theta}\frac{\hat{s}_{n}(y_{n}\lambda)-\tilde{s}_{y_{n}}(y_{n}\lambda)}{y_{n}s'(y_{n})}
\]
locally uniformly in $\lambda>0$. Therefore, using (\ref{eq:yn_over_logn})
and (\ref{eq:q_normal}), we obtain (\ref{eq:nu_normal}). This proves
Theorem \ref{thm:logGW_normal}. 

To prove Corollary \ref{cor:normality}, note that (\ref{eq:conv_rate})
must hold locally uniformly in $\lambda\geq1$: with $r$ defined
by $r(y):=\log s'(y)-(\theta-1)\log y$, (\ref{eq:conv_rate}) is
equivalent to $\lim_{y\rightarrow\infty}(r(y\lambda)-r(y))\phi(y)=0$
for all $\lambda\geq1$, which holds locally uniformly in $\lambda\geq1$
by Theorem 3.1.7c of \citet{Bingham}. By integration,
\begin{equation}
s(y\lambda)-s(y)=ys'(y)h_{\theta}(\lambda)(1+o(1)/\phi(y))\label{eq:ERV-fast}
\end{equation}
locally uniformly in $\lambda\geq1$. Therefore,
\begin{equation}
a_{\iota}(y)=\theta+o(1)/\phi(y),\label{eq:a-fast}
\end{equation}
so by the mean value theorem, $\frac{h_{a_{\iota}(y)}(\lambda)}{h_{a_{\iota}(y)}(\iota)}-\frac{h_{\theta}(\lambda)}{h_{\theta}(\iota)}=O(a_{\iota}(y)-\theta)=o(1)/\phi(y)$
locally uniformly in $\lambda\geq1$. Using (\ref{eq:ERV-fast}),
therefore, 
\begin{equation}
\frac{\tilde{s}_{y}(y\lambda)-s(y\lambda)}{ys'(y)}=\frac{s(y\iota)-s(y)}{ys'(y)}\Bigl(\frac{h_{\theta}(\lambda)}{h_{\theta}(\iota)}+o(1)/\phi(y)\Bigr)+\frac{s(y)-s(y\lambda)}{ys'(y)}=o(1)/\phi(y)\label{eq:s_tilde_fast}
\end{equation}
locally uniformly in $\lambda\geq1$. Finally, by (\ref{eq:multipl_error})
and Theorem \ref{thm: PHI-omega-nu}, as $s'\in RV_{\{\theta-1\}}$,
\begin{equation}
s(y\lambda(1+\tilde{\nu}_{y}(y\lambda)))-s(y\lambda)\sim\lambda^{\theta}h_{\theta}\left(1+\tilde{\nu}_{y}(y\lambda)\right)ys'(y)\sim\lambda^{\theta}\tilde{\nu}_{y}(y\lambda)ys'(y)\label{eq:s_nu_fast}
\end{equation}
locally uniformly in $\lambda\geq1$. Since $s$ is continuously increasing,
$s(z(1+\tilde{\nu}_{y}(z)))=\tilde{s}_{y}(z)$ for all $z>0$, so
combining (\ref{eq:s_nu_fast}) and (\ref{eq:s_tilde_fast}), it follows
that
\begin{equation}
\tilde{\nu}_{y}(y\lambda)=o(1)/\phi(y)\label{eq:nu_fast}
\end{equation}
locally uniformly in $\lambda\geq1$. Using $k_{2}(n)=O(\phi^{2}(y_{n})y_{n}^{-2})$,
(\ref{eq:rho_asnormal-1}) follows from (\ref{eq:rho_asnormal}) and
(\ref{eq:a-fast}); using (\ref{eq:yn_over_logn}) as well, (\ref{eq:q_normal-1})
follows from (\ref{eq:q_normal}) and (\ref{eq:s_tilde_fast}), and
(\ref{eq:nu_normal-1}) follows from (\ref{eq:nu_normal}) and (\ref{eq:nu_fast}).

\subsection{\label{sub:Proof-lograte}Proof that (\ref{eq:log_rate}) implies
$q\in ERV_{\{1\}}$}

Take $w\in RV_{\{0\}}$. With $R_{\lambda}(t):=(U(t\lambda)-U(t))/w(t)$,
(\ref{eq:log_rate}) implies $w(t\lambda)/w(t)=(R_{\lambda\xi}(t)-R_{\lambda}(t))/R_{\xi}(t\lambda)=1+o(1/\log t)$
for all $\lambda\geq1$ and $\xi>1$, so by \citet{B=000026S} (see
\citet{Bingham}, Theorem 2.3.1), $w(t^{\lambda})/w(t)\rightarrow1$
locally uniformly in $\lambda\geq1$ as $t\rightarrow\infty$; applying
Theorem 3.6.6 in \citet{Bingham} gives $U(t^{\lambda})-U(t)\sim(\lambda-1)w(t)\log t$
for all $\lambda\geq1$, so $q\in ERV_{\{1\}}$.

\subsection{Lemmas\label{sub:Lemma}}
\begin{lemma}
\label{lem:f-logf}Let $f$ be a nondecreasing function satisfying
$f(\infty)>0$. 

(a) If $f\in ERV_{\{\theta\}}$, then $\log f\in ERV_{\{\min(\theta,0)\}}(g)$
with the positive function $g$ converging to $\max(\theta,0)$. 

(b) If $\log f\in ERV_{\{\theta\}}(g)$, then $f\in ERV$ if and only
if $g$ converges to some $g_{\infty}\in[0,\infty)$. If so, then
$f\in ERV_{\{\min(\theta,0)+\max(0,g_{\infty})\}}(fg)$.

(c) For $\theta<0$, $\log f\in ERV_{\{\theta\}}$ if and only if
$f\in ERV_{\{\theta\}}$.\end{lemma}
\begin{svmultproof}
\noindent If $f\in ERV_{\{\theta\}}$ with $\theta>0$, then $f\in RV_{\{\theta\}}$
so $\log q\in ERV_{\{0\}}(\theta)$. If $f\in ERV_{\{\theta\}}(\bar{g})$
with $\theta\leq0$, then as $y\rightarrow\infty$, $\bar{g}(y)/f(y)\rightarrow0$
(see \citet{Laurens  boek}, Lemma 1.2.9). Therefore, for every $\lambda\in(0,1)\cup(1,\infty)$,
also $f(y\lambda)/f(y)-1\rightarrow0$, so $\log f(y\lambda)-\log f(y)\sim f(y\lambda)/f(y)-1$
and as $\bar{g}(y)/f(y)\rightarrow0$, we obtain $\log f\in ERV_{\{\theta\}}(\bar{g}/f)$,
proving (a). 

\noindent If $g$ converges to $g_{\infty}>0$, then $f\in RV_{\{g_{\infty}\}}$,
so \textit{$f\in ERV_{\{g_{\infty}\}}(fg)$. }If $g$ converges to
$0$, then for every $\lambda\in(0,1)\cup(1,\infty)$, $f(y\lambda)/f(y)-1\rightarrow0$,
so $f(y\lambda)/f(y)-1\sim\log f(y\lambda)-\log f(y)$ as $y\rightarrow\infty$.
Therefore, \textit{$f\in ERV_{\{\theta\}}(fg)$} and necessarily,
$\theta\leq0$. This proves the ``if'' part of (b); the ``only
if'' part follows from (a), and (c) follows directly from (a) and
(b). \end{svmultproof}

\begin{lemma}
\label{lem:random_argument_converges}(a) $\hat{\iota}$ defined by
(\ref{eq:iota_def}) with (\ref{eq:k_j}) satisfies
\begin{equation}
X_{n-k_{m}(n)+1,n}=q(y_{n}\hat{\iota}_{m}(n))\quad\forall m\in\{0,1,2\},\: n\in\mathbb{N}\quad\textrm{a.s.}\label{eq:X=00003Dq_yiota}
\end{equation}

(b) Let $k_{2}:\mathbb{\mathbb{N}\rightarrow N}$ satisfy (\ref{eq:k2_cond})
and $k_{2}(n)/\log\log n\rightarrow\infty$. If $q\in ERV$, then
\begin{equation}
\hat{\iota}_{m}(n)\rightarrow\iota^{m}\quad\forall m\in\{0,1,2\}\quad\textrm{a.s.}\label{eq:iota_converges2}
\end{equation}

If $F$ is continuous, then
\begin{equation}
(\hat{\iota}_{m}(n)-\iota^{m})y_{n}\sqrt{k_{m}}/\log\log n\rightarrow0\quad\forall m\in\{0,1,2\}\quad\textrm{a.s.}\label{eq:iota_converges3}
\end{equation}
\end{lemma}
\begin{svmultproof}
Almost surely, $X_{n-k+1,n}=q(-\log\mathcal{U}_{k,n})$ for all $n\in\mathbb{N}$
and $k\in\{1,...,n\}$, with $\mathcal{U}_{k,n}$ the $k^{th}$ order
statistic of a sample of $n$ independent random variables uniformly
distributed on $(0,1)$. Therefore, by (\ref{eq:iota_def}),
\begin{equation}
\hat{\iota}_{m}(n)y_{n}=q^{-1}(q(-\log\mathcal{U}_{k_{m}(n),n}))\quad\forall m\in\{0,1,2\},\: n\in\mathbb{N}\quad a.s.\label{eq:iota_hat}
\end{equation}
and (\ref{eq:X=00003Dq_yiota}) follows. For (b), note that $k_{m}(n)/n\rightarrow0$
and $k_{m}(n)/\log\log n\rightarrow\infty$ for each $m\in\{0,1,2\}$,
so by \citet{Einmahl =000026 Mason} (Theorem 3(III) with $\nu=\frac{{\scriptstyle 1}}{2}$),
\[
\left((n/k_{m}(n))\mathcal{U}_{k_{m}(n),n}-1\right)\sqrt{k_{m}}/\log\log n\rightarrow0\quad\forall m\in\{0,1,2\}\quad\textrm{a.s.}
\]
and as (\ref{eq:k2_cond}) implies that $(\log(n/k_{m}(n))-\iota^{m}y_{n})=O(1/k_{m}(n))$
for $m=0,1,2$,
\begin{equation}
(\log\mathcal{U}_{k_{m}(n),n}+\iota^{m}y_{n})\sqrt{k_{m}}/\log\log n\rightarrow0\quad\forall m\in\{0,1,2\}\quad\textrm{a.s.}\label{eq:unifproc_conv}
\end{equation}

If $F$ is continuous, then $q^{-1}\circ q=\textrm{Id}$, so (\ref{eq:iota_converges3})
follows from (\ref{eq:iota_hat}) and (\ref{eq:unifproc_conv}). If
not, then $\hat{\iota}_{m}(n)=y_{n}^{-1}q^{-1}(q(\iota^{m}y_{n}+o(1)))$
a.s. by (\ref{eq:iota_hat}) and (\ref{eq:unifproc_conv}), so if
$q\in ERV$, then (\ref{eq:iota_converges2}) follows from (\ref{eq:F_exp_logform}). \end{svmultproof}

\begin{acknowledgements}
The author would like to thank John Einmahl, Laurens de Haan, Juan-Juan
Cai, two anonymous Referees and especially an anonymous Associate
Editor of EXTREMES for their helpful criticism and suggestions.
\end{acknowledgements}

\noindent \begin{flushleft}
\textbf{\small{}Conflict of Interest }{\small{}The author declares
that he has no conflict of interest.}
\par\end{flushleft}{\small \par}

\end{document}